\documentclass[11pt]{article}
\topmargin=-0.5cm\textwidth 16cm\textheight 22cm

\oddsidemargin=-0.5cm

\usepackage{graphicx}
\usepackage{float}
\usepackage{subfig}
\usepackage{overpic}
\usepackage{amsthm,amsmath,amssymb}
\usepackage{mathrsfs}
\usepackage{amsmath}
\usepackage[noend]{algpseudocode}
\usepackage{algorithmicx,algorithm}

\newtheorem{theorem}{Theorem}[section]

\newtheorem{coro}{Corollary}[section]

\newtheorem{lemma}{Lemma}[section]

\author{Pengyang Fan, Chao Zhai~\thanks{Pengyang Fan and Chao Zhai are with School of Automation, China University of Geosciences, Wuhan 430074 China, and with Hubei Key Laboratory of Advanced Control and Intelligent Automation for Complex Systems and Engineering Research Center of Intelligent Technology for Geo-exploration, Ministry of Education, Wuhan 430074 China. Corresponding author: Chao Zhai
(email: zhaichao@amss.ac.cn).}}

\date{August 2022}

\begin{document}

\title{Distributed Control Strategy for Layered Barrier Coverage of Multi-Agent Systems in Uncertain Environments}

\maketitle

\begin{abstract}
This paper presents a distributed multi-layer ring barrier coverage algorithm. In order to achieve single-layer ring barrier coverage, a distributed single-layer ring barrier coverage algorithm that maximises the probability of monitoring is proposed. Considering the security risks of single-layer barrier coverage, a distributed adjustment mechanism between multiple layers of barriers is designed and combined with the single-layer ring barrier coverage algorithm to propose a distributed multi-layer ring barrier coverage algorithm. Furthermore, we present a theoretical analysis of the proposed algorithm to demonstrate its effectiveness and necessity. Finally, our algorithm is verified by numerical simulation and experiment.
\end{abstract}

\section{Introduction}

Multi-agent systems(MASs) are composed of agents that interact with each other in an environment.
Each agent is a system, MASs are systems in which a large number of agents are grouped together and realise an overall behaviour or activity.
Agents can be natural creatures\cite{VT1995}, artificial robots or mobile sensors\cite{ro2020}.
The MASs aims to take a distributed approach to solve some large and complex problems.
Each agent is an independent individual that can perceive the environment, process information, communicate, learn, and make decisions independently.
Since each agent adopts an independent strategy, coordinated control of multi-agent systems is essential in order for the whole system to accomplish a common goal, and this area has attracted many scholars to conduct research.

Multi-agent coverage control is a hot research topic in multi-agent coordination control.
Multi-agent coverage control refers to a group of agent bodies with mobile, communication, computing, and learning capabilities to sense the environment and perform a given task in a distributed manner in a given or indefinite area, such as: search and rescue, missile interception, monitoring, sweeping, etc\cite{zhaim2021}.
Multi-agent coverage control can be classified into area coverage, sweeping coverage and barrier coverage according to the area covered by the agent.
Area coverage is a series of operations in which each agent body determines its optimal state in the area through communication, computation, and coordination and achieves this state through some control science methods.
The most classic one is the multi-agent coverage algorithm based on Voronoi partition\cite{CJ2004}, in which each agent divides a convex region into sub-regions through communication, and each agent uses a strategy of moving to the center of mass of the sub-region to maximize the coverage quality. This method can cover a convex region to the maximum extent.
On this basis, many scholars have found many problems and proposed some solutions.
For example, to solve the non-convex region, some scholars proposed the Voronoi center-of-mass coverage algorithm for non-convex region based on the geodesic Voronoi partition algorithm\cite{THM2013}; to solve the time-varying density function problem, some scholars proposed the Voronoi center-of-mass coverage in dynamic environment based on the control barrier function\cite{SM2019}; to solve the coverage problem in uncertain environment, some scholars proposed the Voronoi center-of-mass coverage in uncertain environment based on the Bayesian estimation\cite{BA2020}.
Sweep coverage not only requires the agent to reach the designated area but also requires agent to be able to traverse the entire area to achieve cleaning of the environment.
For example, some scholars have achieved equal-task sweep coverage of a class of regions based on equal-task partitioning methods, which can improve the overall efficiency\cite{zhai13}. Some scholars have also proposed a multi-agent sweeping coverage algorithm based on the temperature field approach\cite{IS2016}.
Moreover, literature \cite{Zheng2022} combines Voronoi segmentation with a temperature field approach to design a distributed overlay method that enables each agent to have the same workload.

Multi-agent barrier coverage refers to the coverage of a group of agents on a line, which is usually used to monitor whether a creature or object crosses the line or to intercept objects that attempt to cross on the line.
The literature \cite{ks2005} proposes the definition of barrier coverage and k-barrier coverage.
The k-barrier coverage is further divided into weak k-barrier coverage and strong k-barrier coverage\cite{ks2005}\cite{liu2008}\cite{wang2013}.
The strong k-barrier coverage means that an intruder is detected by at least k agents regardless of any path into or through the target area.
Besides, Chen et al. proposed the concept of local barrier coverage, which can reduce the number of agents compared to global fence overlays and can also be used for general cases\cite{chen2007}.
However, local barrier coverage is a security risk, as intruders can potentially traverse the area without being detected.
There are currently many algorithms for barrier coverage and k-barrier coverage.
For example, the coverage-based approach proposed in literature \cite{zhai16} can equip the task of assigning intrusion probability to intercept the intruded items thus achieving protection of the target.
In addition to this, scholars have designed a distributed algorithm that can achieve a uniform barrier coverage between two landmarks\cite{cheng2009}.
Ban et al. investigate the strong k-barrier coverage problem of mobile sensor networks over open belt using a grid-based approach in \cite{banD2010}.
However, the algorithm can only achieve coverage on a straight line between two points, and cannot achieve coverage on a curve, nor can it achieve coverage on a closed curve.
In practical applications, if the targets in the area need to be protected or monitored in an all-round way, the whole boundary of the area needs to be covered.
For an enclosed target area, the agents also needs to be covered within the closed belt.
For this reason, Binay et al. designed the algorithm to move the smart body to the boundary of a simple polygon and thus protect the area inside the polygon\cite{BB2009}.
Moreover, a barrier coverage algorithm has been designed on a circle, and the agent can be uniformly covered on the circle with limited communication\cite{song2018}.
But a circle is a kind of convex region, and how to perform barrier coverage on the boundary of non-convex regions is the inspiration of our research.
Moreover, covering only the boundary means that as soon as the intruder breaks through this layer, the intruder enters the area we need to protect and the system loses the means to monitor the intruder, which means an increase in security risks.
From a security point of view, a k-barrier coverage is more secure than a single layer barrier coverage.

To this end, we first designed algorithms that can perform barrier coverage on the boundary of a class of non-convex regions.
The algorithm is applied in the context of monitoring intruders, and uses a region partition to assign a region to each agent for monitoring, which can eventually lead to a local maximum monitoring probability.
Therefore, we want to design a multi-agent control algorithm with multi-layer barrier coverage to solve this problem. When an intruder breaks through a layer, there are still several internal monitoring layers that can continue to monitor the intruder.
The goal of this paper is to design a distributed multi-agent barrier coverage algorithm that can implement a multi-layer barrier coverage and can autonomously adjust the number of agents on each layer to optimize the monitoring quality of the whole system. The contributions of this paper are as follows.
\begin{enumerate}
\item	Design a multi-agent barrier coverage algorithm for a class of non-convex areas which can maximize intruder monitoring.
\item	Develop a distributed adjustment mechanism for the number of agents per layer, which can optimize the monitoring probability of multi-agent systems.
\item	Combining the single-layer fence coverage algorithm and the distributed adjustment mechanism of the number of multi-layer agents, we propose the multi-layer barrier coverage algorithm.
\end{enumerate}

The remainder of this paper is structured as follows: Section~\ref{sec:pro} presents a single-layer barrier coverage algorithm for non-convex region boundaries at first and then provides a distributed adjustment mechanism for the number of agents in a multi-layer coverage region and a multi-layer barrier coverage algorithm. Section~\ref{sec:mai} presents a theoretical verification of the single-layer barrier coverage algorithm and the multi-layer barrier coverage algorithm proposed in Section~\ref{sec:pro} and gives the case when our algorithm is applied to a circle. Section~\ref{sec:cas} simulates our algorithm and performs experimental validation on Robotarium. Finally, we conclude the paper in Section~\ref{sec:con}.

\section{Problem Formulation}\label{sec:pro}
In this section, we will introduce the distributed multi-agent barrier coverage algorithm.
Consider a closed curve region $D$, which can be represented by polar coordinates, the center of the circle $D$ is denoted by $O$.
Without loss of generality, we can set the center of the circle as the origin, i.e. $O=(0,0)$.
The boundary of the circle area $D$ is denoted by $\partial D$.
The radius of the closed curve region is denoted by $R(\theta),\theta\in [{0,2\pi })$.

\subsection{Single layer barrier coverage}
In the application of monitoring intruder, multiple layer barrier coverage is more effective than single layer barrier coverage. Therefore, we propose a multiple layer barrier coverage algorithm in this paper. Since this algorithm is based on single layer barrier coverage algorithm, we firstly introduce the single layer barrier coverage algorithm in this subsection.

In layer $k$, there are $N_k$ mobile agents that can communicate and monitor.
The layer $k$ can be denoted by $R_k(\theta), \theta \in [0,2\pi)$.
We assume that agents can all communicate with each other if they are on the same layer.
We use $I_{N_k}$ to denote the number of agents on this layer, $I_{N_k} = \{1,2,...,N_k\}$.
We use $\rho(\theta)$ to denote the probability that each point on the layer is invaded by an intruder.
We denote the position of agents by $P = \{ {{p_1},{p_2},...,{p_{N_k}}} \}$.
We specify an angle for agent $i_k$ with respect to the center of the circle $O$, denoted by $\varphi_{i_k}$, $i_k=1,2,...,N_k$.
We denote the probabilistic model that the agent $i_k$ detects an intruder by $f(d(\varphi_{i_k},\theta))$, where $d(\varphi_{i_k},\theta)$ is a distance function about $\varphi_{i_k}$ and $\theta$, and the distance function is Lipschitz continuous, and $d\left( {{\varphi _{i_k}},\theta } \right) \propto \delta(\varphi_{i_k},\theta)$. The function $\delta(\varphi_{i_k},\theta)$ is denoted by
\begin{equation}
        {\delta ({\varphi_{i_k},\theta})} = \left\{ {\begin{array}{*{20}{c}}
{ |\varphi_{i_k}-\theta- 2\pi| }&{if\quad\varphi_{i_k}-\theta > \pi }\\
{|\varphi_{i_k}-\theta + 2\pi| }&{if\quad\varphi_{i_k}-\theta \le  - \pi }\\
{|\varphi_{i_k}-\theta|}&{else}
\end{array}} \right.
\label{delta}
\end{equation}
Moreover, $f(d(\varphi_{i_k},\theta))$ need to meet the following conditions:
\begin{enumerate}
\item  $f(d(\varphi_{i_k},\theta))$ is differentiable.
\item $f(d(\varphi_{i_k},\theta))$ is monotonically decreasing.
\end{enumerate}
Now we give the calculation way of $\varphi_{i_k}$ as follows
\begin{equation}
    \varphi_{i_k}(t)=\Psi(p_{i_k}^{T}(t)).
    \label{varphit}
\end{equation}
where $\Psi(x,y)$ is an operation specified by us, which is calculated as follows
\begin{equation}
    \Psi ( {x,y} ) = \left\{ {\begin{array}{*{20}{c}}
  {\arctan ( {\frac{y}{x}} ) + \pi }&{x < 0}&{} \\
  {\arctan ( {\frac{y}{x}} )}&{y \geqslant 0}&{x > 0} \\
  {\arctan ( {\frac{y}{x}} ) + 2\pi }&{y < 0}&{x > 0} \\
  {\frac{\pi }{2}}&{y > 0}&{x = 0} \\
  { - \frac{\pi }{2}}&{y < 0}&{x = 0} \\
  0&{y = 0}&{x = 0}
\end{array}} \right.
    \label{phat}
\end{equation}
Combining (\ref{varphit}) and (\ref{phat}), it is easy to know that $\varphi_{i_k}\in[0,2\pi)$, for $i_k=1,2,...,N_k$. Moreover, agents have the following numbering rules
\[0 \le {\varphi _1}\left( 0 \right) < {\varphi _2}\left( 0 \right) < ... < {\varphi _{N_k}}\left( 0 \right) < 2\pi \]
\begin{figure}
{\includegraphics[width=10cm]{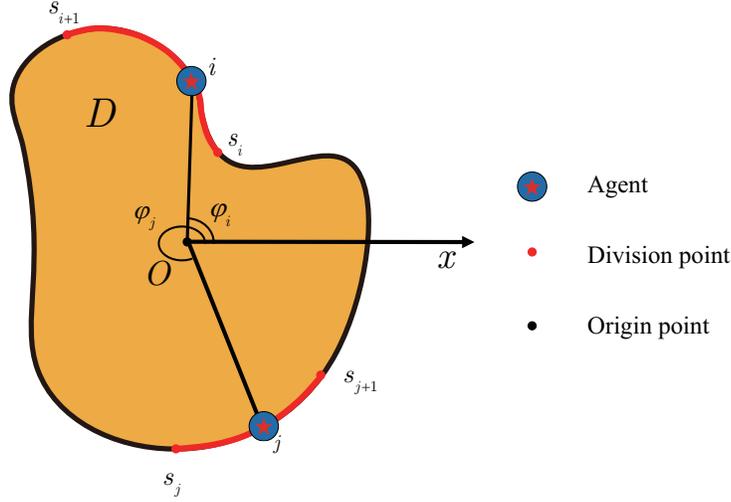}}\centering
\caption{\label{area} Coverage area $D$, agent communication radius and monitoring radius, and the color of the pentagram indicates the working state of the agent.}
\end{figure}

As the distance increases, the detection probability of the agent will decrease. Therefore, we stipulate that the agent only detects the area in which it is responsible.
Therefore, we propose a partition method that can partition the layer $k$ into $N_k$ subareas.
In order to achieve partition, we provide $N_k$ division points on the layer $k$, denoted by ${\rm S }= \{s_1,s_2,...,s_{N_k} \}$, and $s_{i_k} \in [0,2\pi)$ represent the phase of these division points.

These division points divide the layer $k$ into $N_k$ sub-areas, which is denoted by $E=\{E_1,E_2,...,E_{N_k}\}$, $E_{i_k}$ is represented as follows
\begin{equation}
    {E_{i_k}} = \left\{ {\begin{array}{*{20}{c}}
{\left\{ {(R_k(\theta),\theta) |{s _{i_k}} \le \theta < {s _{i_k + 1}}} \right\}}&{if\quad{s _{i_k}} < {s _{i_k+1}}}\\
{\left\{ {(R_k(\theta),\theta) |{s _{i_k}} \le \theta < 2\pi ,0 \le \theta < {s _{i_k + 1}}} \right\}}&{otherwise}
\end{array}} \right.
\label{Ei}
\end{equation}
and $s_{N+1}=s_1$. The probability of an intruder invading from $E_{i_k}$ is denoted by $m_{i_k}$, and $m_{i_k}$ is calculated as follows
\begin{equation}
{m_{i_k}} = \left\{ {\begin{array}{*{20}{c}}
{\int_{{s _{i_k}}}^{{s _{i_k + 1}}} {\rho \left( \theta  \right)d\theta } }&{if\quad{s _{i_k}} < {s _{i_k+1}}}\\
{\int_{{s _{i_k}}}^{2\pi } {\rho \left( \theta  \right)d\theta }  + \int_0^{{s _{i_k + 1}}} {\rho \left( \theta  \right)d\theta } }&{otherwise}
\end{array}} \right.
    \label{mi}
\end{equation}
In the same way, we use $\mathcal{T}_{i_k}$ to indicate where the division point exists as follows
\begin{equation}
    {{\rm{{\cal T}}}_{i_k}} = \left\{ {\begin{array}{*{20}{c}}
{\left\{ {(R_k(\theta),\theta)|{\varphi _{i_k}} \le \theta < {\varphi _{i_k+1}}} \right\}}&{if\quad {\varphi _{i_k}} < {\varphi _{i_k+1}}}\\
{\left\{ {(R_k(\theta),\theta)|{\varphi _{i_k}} \le \theta < 2\pi ,0 \le \theta < {\varphi _{i_k+1}}} \right\}}&{otherwise}
\end{array}} \right.
\label{regionT}
\end{equation}
where $\varphi_{N_k+1} = \varphi_1$.

According to the Law of Total Probability, we can give the monitoring probability of the multi-agent systems as follows
\begin{equation}
    H(\varphi,\mathcal{S}) = \sum\limits_{i = 1}^N {\int_{E_{i_k}} {f(d( \varphi _{i_k},\theta) )\rho (\theta )d\theta } }
    \label{H}
\end{equation}
It is not difficult to find that the meaning represented by the quality function $(\varphi,\mathcal{S})$ is the probability that an intruder intrusion is detected.
For applications that detect intruders, the larger the value of the equation (\ref{H}) the better the system. Thus the problem of detect intruders can be transformed into the following optimization problem
\begin{equation}\label{optimization}
  \max \left( {H\left( {\varphi ,\mathcal{S}} \right)} \right)
\end{equation}
In order to fit the actual situation, We express the agent dynamics equations with the following nonholonomic constraint motion equations
\begin{equation}
\left\{ {\begin{array}{*{20}{c}}
{{x_{i_k}} = {r_{i_k}}\cos \left( {{\varphi _{i_k}}} \right)}\\
{{y_{i_k}} = {r_{i_k}}\sin \left( {{\varphi _{i_k}}} \right)}\\
{{{\dot \varphi }_{i_k}} = \omega_{i_k} }\\
{{{\dot r}_{i_k}} = u_{i_k}^r}
\end{array}} \right.
    \label{XY}
\end{equation}
where $r_{i_k}$ represent the distance between the agent and the center of the circle, i.e. $r_{i_k}=\|p_{i_k}\|$.
$x_{i_k}$ and $y_{i_k}$ are the horizontal and vertical coordinates of $p_{i_k}$, and $p_{i_k}(t)= (x_{i_k}(t),y_{i_k}(t))^T$ represent the agent position at the time step $t\in\mathbb{R}^+$.
$\omega_{i_k}$ represents the angular velocity of the agent, which will be introduced later.
$u_{i_k}^r$ is denoted as follows
\begin{equation}
u_{i_k}^r={\kappa _r}\left( {{R(\varphi_{i_k} )} - {r_{i_k}}} \right),
    \label{uri}
\end{equation}
where $\kappa_r$ is an adjustable parameter.
From (\ref{XY}) and (\ref{uri}), we can get the dynamic equation of the agents is
\begin{equation}
\left\{ {\begin{array}{*{20}{c}}
{\dot{x}_{i_k} = \frac{{\partial x_{i_k}}}{{\partial {r_{i_k}}}}\dot{r_{i_k}} + \frac{{\partial x_{i_k}}}{{\partial {\varphi _{i_k}}}}\dot{\varphi _{i_k}} = u_{i_k}^r\cos \left( {{\varphi _{i_k}}} \right) - {r_{i_k}}\omega_{i_k} \sin \left( {{\varphi _{i_k}}} \right)}\\
{\dot{y}_{i_k} = \frac{{\partial y_{i_k}}}{{\partial {r_{i_k}}}}\dot{r_{i_k}} + \frac{{\partial y_{i_k}}}{{\partial {\varphi _{i_k}}}}\dot{\varphi _{i_k}} = u_{i_k}^r\sin \left( {{\varphi _{i_k}}} \right) + {r_{i_k}}\omega_{i_k} \cos \left( {{\varphi _{i_k}}} \right)}
\end{array}} \right.
    \label{dyn_p}
\end{equation}
Using the gradient method for (\ref{H}), we can get
\begin{equation}
\begin{aligned}
\dot H &= \sum\limits_{i = 1}^N {\int_{{E_{i_k}}} {\frac{{\partial f\left( {d({\varphi _{i_k}},\theta )} \right)}}{{\partial {\varphi _{i_k}}}}\rho \left( \theta  \right)d\theta } }  \cdot {\omega _{i_k}} + \sum\limits_{i = 1}^N {\frac{{\partial H}}{{\partial {s _{i_k}}}}{{\dot s }_{i_k}}}
\end{aligned}
\label{dh}
\end{equation}
To maximize $H$, we can set $\omega_{i_k}$ as follows
\begin{equation}
{\omega}_{i_k}=\kappa_\omega  \int_{{E_{i_k}}}  {\frac{{\partial f\left( {d({{\varphi _{i_k}},\theta })} \right)}}{{\partial {\varphi _{i_k}}}}\rho \left( \theta  \right)d\theta }
    \label{omega}
\end{equation}
where $\kappa_\omega $ is a adjustable constant ,and (\ref{omega}) can guarantee that (\ref{dh}) is not less than zero, which means that $H$ does not decrease.

We construct the following control input of division points
\begin{equation}
\dot{s}_{i_k}=\kappa_{s}(d(\varphi_{i_k},s_{i_k})-d(\varphi_{i_k-1},s_{i_k}))
\label{coni_s}
  \end{equation}
where
$\kappa_s$
is positive constant.
In order to apply the algorithm to the multi-layer barrier coverage algorithm, we need to make some adjustments to the control input.
We can find that for agent $i_k$, which performs barrier coverage, only the states of agent $i_k -1$ and agent  $i_k +1$ are needed to complete the algorithm. Moreover, the relative position of agent $i_k -1$ and agent $i_k +1$ is the closest agent in the clockwise and counterclockwise direction of agent $i_k$, respectively.
Therefore, in multi-layer coverage problem, we use $\alpha$ and $\beta$ to denote agent $i_k-1$ and agent $i_k+1$. We can rewrite the control input of the agent as follows
\begin{equation}
\begin{aligned}
{{\dot s}_{{i_k}}} &= {\kappa _s}(d({\varphi _{{i_k}}},{s_{{i_k}}}) - d({\varphi _\alpha },{s_{{i_k}}}))\\
{\omega _{{i_k}}} &=
\left\{ {\begin{array}{*{20}{c}}
{{\kappa _\omega }\int_{{s_{{i_k}}}}^{{s_\beta }} {\frac{{\partial f\left( {d({\varphi _{{i_k}}},\theta )} \right)}}{{\partial {\varphi _{{i_k}}}}}\rho \left( \theta  \right)d\theta } },&{if\:\:{\mkern 1mu} {s_{{i_k}}} < {s_\beta }}\\
{{\kappa _\omega }\int_{{s_{{i_k}}}}^{2\pi } {\frac{{\partial f\left( {d({\varphi _{{i_k}}},\theta )} \right)}}{{\partial {\varphi _{{i_k}}}}}\rho \left( \theta  \right)d\theta }  + \int_0^{{s_\beta }} {\frac{{\partial f\left( {d({\varphi _{{i_k}}},\theta )} \right)}}{{\partial {\varphi _{{i_k}}}}}\rho \left( \theta  \right)d\theta } }.&{otherwise}
\end{array}} \right.
\end{aligned}
    \label{Single_layer_con}
\end{equation}

Finally, we give the single layer barrier distributed coverage algorithm as in Table.\ref{tab:single_layer}. In this we ensure that the split point must lie at the midpoint of the curve between the intelligences. Since the splitting point is virtual, this step is quite fast in practical execution.
The multi-layer barrier coverage algorithm is described next.
\begin{algorithm}[t]
\caption{\label{tab:single_layer} Single Layer Barrier Coverage Algorithm}
\hspace*{0.02in}
{\bf Initializate: $\kappa_r$, $\kappa_\omega $, $\kappa_ s$, $T^*$}\\
 For $i_k \in I_{{N}_k}$, $i_k$-th agent performs as follow
\begin{algorithmic}[1]
\For{$t=1:T^*$}
       \State Calculate $\varphi_{i_k}$ by (\ref{varphit}) and (\ref{phat});
       \While{$d({\varphi _{{i_k}}},{s_{{i_k}}}) - d({\varphi _\alpha },{s_{{i_k}}})<\varepsilon$}
       \State Update $s_{i_k}$ with (\ref{Single_layer_con});
       \EndWhile\State {\bf end~while}
       \State Update $p_{i_k}$  with (\ref{Single_layer_con}), (\ref{uri}) and (\ref{XY});
\EndFor \State {\bf end~for}
\end{algorithmic}
\end{algorithm}

\subsection{Multi-layer barrier coverage}
In this subsection, we will introduce a distributed barrier coverage control algorithm based on subsection 2.1.

Consider $K^*$ layers of area to be covered. We use $R_1(\theta), R_2(\theta),...,R_{K^*}(\theta)$ to denote the polar coordinate equation of these layers, and $0<R_1(\theta)< R_2(\theta)<...<R_{K^*}(\theta)<R_{max}$, for $\theta \in (0,2\pi]$. Where $R_{max}$ is a positive constant. From subsection 2.1, the number of agents on layer $k$ is denoted by ${N_k}$. In the same way, the number of all agents on the layer is denoted by $N_L$, and $N_L =\sum_{k=1}^{K^*} N_k.$

Rather than the number of agents in each layer being fixed, we prefer to find a distributed method that can automatically allocate the number of agents in each layer.
We call this function as layer swapping. Agent will get a target layer when it is going to do layer swapping.
It is easy to find when an agent moves to its target layer, the agent does not belong to any layer. We call this class of agents as free agent. On the other hand, agents belong to a layer are called as layer agent.
Moreover, we think that there should be no difference between the states of agents at the initial moment except their distinct positions. Therefore, all the agents are free agent at the initial moment in our work. We can think of free agent as stem cell and layer agent as differentiated cell.
The transformation of free agent into layer agent is like the differentiation of cell.
The number of free agent is denoted by $N_F$. The number of all agents is represented by $N$, and $N= N_L+N_F$. In the initial moment, $N=N_F$. Here we numbered all the agents as $I_{N}=\{1,2,...,N\}$. We use $a_i$ to denote what type of agent is the agent $i$, when $a_i=0$, the agent is a free agent and when $a_i =1$, it is a layer agent. And we use the Algorithm \ref{tab:number} to calculate $N_k$ for each agent, which is the basis of our work.

\begin{algorithm}[t]
\caption{\label{tab:number} $N_k$ Calculate Algorithm}
\hspace*{0.02in}
\begin{algorithmic}[1]
\For{$j=1:N$}
    \If{$ a_j = 1$}
        \State{$k = k_j$;}
        \State{$N_k = N_k+1$;}
        \State{$I_{N_k}=I_{N_k}\cup\{j\}$;}
    \EndIf \State{\bf end~if}
\EndFor \State{\bf end~for}
\end{algorithmic}
\end{algorithm}

It is similar to that shown in subsection 2.1, $\varphi_i$ and $p_i= (x_i,y_i)^T$ denote the phase angle and position of agent $i$, respectively.
And we use $k_i$ to denote the target layer of agent $i$, and $k_i \in \{0,1,2,...,K\}$. $k_i=0$ means agent $i$ has no target layer. In this case, in order to find target layer agent $i$ will move as follows
\begin{equation}
\left\{ \begin{aligned}
\dot{x}_i &=  - {r_i}{\omega _0}\sin \left( {{\varphi _i}} \right),\\
\dot{y}_i &= {r_i}{\omega _0}\cos \left( {{\varphi _i}} \right),
\end{aligned}
\right.
    \label{u_no_target}
\end{equation}
where $\omega _0$ is a constant. And when $k_i  \ne 0$, similar to control input (\ref{u_no_target}) , agent will move as follows
\begin{equation}
  \left\{ \begin{aligned}
        \dot{x}_i &=  \kappa_r(R(\varphi_i)-r_i)cos(\varphi_i), \\
        \dot{y}_i &= \kappa_r(R(\varphi_i)-r_i)sin(\varphi_i).
   \end{aligned}
     \right.
\label{u_get_target}
\end{equation}

In subsection 2.1, we have numbered the agent. However, there are some difference about agent number in this subsection. In layer $k$, $I_{N_k} = \{j|a_j\times k_j=1\}$. As we calculated in Algorithm \ref{tab:number}, we use $I_{N_k}$ to denote the number set of layer $k$. If all agents work on layers, there is such a relationship that $I_N = \bigcup_{k=1}^{K^*}I_{N_k} $.

When the agent is close to a certain layer, the agent needs to consider whether it c an join the covering task of this layer.
We use $r_k$ to denote the range of layer $k$, and $r_k :=\{(rcos(\theta),rsin(\theta))|R_k(\theta)-\Delta \le r\le R_k(\theta)+\Delta\}$. And $\Delta$ is a is a small enough constant.
When an agent enters $r_k$, we consider that the agent is close to layer $k$.
Moreover ,we consider that whether agent $i$ can enter the $k$-th layer depends on agent $j$ already working in the $k$-th layer, rather than agent $i$ itself.
And only when agent $j$ approves this entry, agent $i$ can enter layer $k$ to perform the detection task, otherwise agent $i$ should try to move to other layers.
If there is no agent in layer $k$, the agent will enter the layer $k$ without any problem.

Now, we will introduce the detect state of agent $i$, when agent $i$ is performing the detect task.
The detect state of agent $i$ is the key variable to judge whether the agent outside the layer can enter the layer to execute the task.
It is easy to know that when an agent is carrying too much work, its detection capability will decrease.
On the other hand, when there are enough agents in a certain layer, the contribution of agents entering this layer is not as large as that entering other layers.
Therefore, we use $c_{i}$ to denote the detect state of agent $i$ as follows
\begin{equation}
{c_{i}} = \left\{ {\begin{array}{*{20}{c}}
1&{\eta_{i}  > h}\\
0&{otherwise}
\end{array}} \right.
\label{ci}
\end{equation}
where $h\in(0,1)$ is an adjustable parameter, and
\begin{equation}
\eta_{i}  = \frac{{\int_{E_i}{f(d({\varphi _{{i}}},\theta ))\rho (\theta )d\theta }}}  {{\int_{{E_{{i}}}} {\rho (\theta )d\theta } }},
\label{eta}
\end{equation}
$\eta_{i}$ can be interpreted as the task completion rate. It can also be interpreted as the probability of being detected by agent $i$ under the condition that intruder invades region $E_{i}$.

\begin{figure}
{\includegraphics[width=0.85\linewidth]{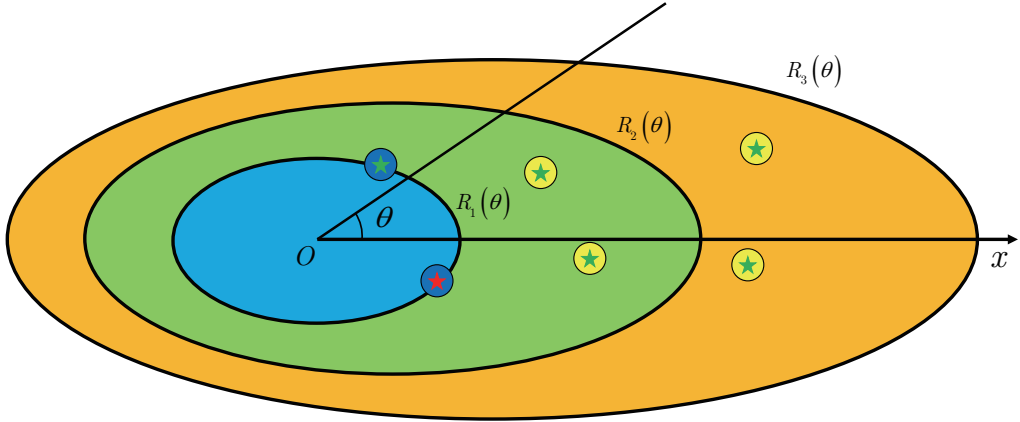}}\centering
\caption{\label{f_multilayers_area} Coverage area $D$, and agent communication radius and monitoring radius.}
\end{figure}
As shown in Fig.\ref{f_multilayers_area}, there are three layers of area to cover. The color of the star represents the detection state of the agent, and the color of the circle represents whether the agent is a free agent.
Blue circle means this agent is performing detect task, and yellow circle means this agent is a free agent and moving to its target layer.
Red star means that this agent does not allow other agents to enter this layer. Green star means that this agent allows other agent enter this layer. And the star will turn red if $c_{i}=1$.

Every free agent has a target layer, we use $k_i$ to denote the target layer of agent $i$, and how to help the agent find the target layer is the basis of the algorithm.
We present Algorithm \ref{tab:target_layer_find} to help the agent achieve this function. Our idea is that all agents target the first layer first. If you cannot enter to the first layer, consider the second, and so on, all the way to the $K^*$-th layer. When considering whether to take the $k$-th  layer as the target layer, if there is no agent at the $k$-th layer, agent $i$ will choose to target at the $k$-th layer. If there are agents in $k$-th layer, agent $i$ needs to predict whether the agent at layer k can allow entry.
\begin{algorithm}[t]
 \caption{\label{tab:target_layer_find} Target Layer Identification Algorithm}
\hspace*{0.02in}
Agent $i$ performs as follows
\begin{algorithmic}[1]
\For{$k = 1:K^*$}
        \If{$N_k=0$}
            \State{$k_i = k$;}
        \Else
            \For{$j\in I_{N_k}$}
                \If{$(\varphi_i,R_k(\varphi_i))\in E_{j}$}
                    \If{$c_j=0$}
                        \State{$k_i = k_j$;}
                    \EndIf\State{\bf end~if}
                \EndIf \State{\bf end~if}
            \EndFor\State{\bf end~for}
        \EndIf \State{\bf end~if}
\EndFor\State{\bf end~for}
\State{\bf return} $k_i$
\end{algorithmic}
\end{algorithm}

Now, we need to consider how to let an agent enter a layer. As mentioned above, whether an agent can enter the layer depends on the agent in the layer. Therefore, we use the Algorithm \ref{tab:request} to realize this function. In Algorithm \ref{tab:request}, in order to prevent the phase of the agent from being the same, resulting in the difficulty of setting subsequent division points, the agent will change its phase when it finds the phase is the same. To avoid agents being preempted by other agents when they change phase, we need to set $a_i=1$ first.
\begin{algorithm}[t]
 \caption{\label{tab:request} Entry Request Algorithm}
\begin{algorithmic}[1]
\If{$N_k=0$}
    \State{$a_i = 1$;}
     \State $\alpha=\beta=i$;
    \State $s_i = \varphi_i +\pi$;
    \If{$s_i>2\pi$}
        \State $s_i = s_i - 2\pi$;
    \EndIf\State{\bf end~if}
\Else
    \For{$j \in I_{N_k}$}
        \If{$(\varphi_i,R_k(\varphi_i))\in E_{j}$}
            \If{$c_{j}=1$}
            \State{$k_i = 0$;}
            \Else
            \State{$a_i = 1$;}
            \While{$\varphi_i = \varphi_{j}$}
                \State{Move with (\ref{u_no_target});}
                \State {Calculate $\varphi_{i}$ by (\ref{varphit}) and (\ref{phat});}
            \EndWhile \State{\bf end~while}
            \EndIf \State{\bf end~if}
        \EndIf \State{\bf end~if}
    \EndFor \State{\bf end~for}
\EndIf\State{\bf end~if}
\end{algorithmic}
\end{algorithm}

It is easy to find that the Algorithm \ref{tab:single_layer} requires agent $\alpha$ and agent $\beta$ to implement.
Therefore, we design the Algorithm \ref{tab:neighbor_find} to find the agent $\alpha$ and agent $\beta$ for agent $i$. In Algorithm \ref{tab:neighbor_find}, we design a operation similar to (\ref{delta}) as follows
\begin{equation}
        {\delta^* ({\theta_1,\theta_2})} = \left\{ {\begin{array}{*{20}{c}}
{ \theta_1-\theta_2- 2\pi }&{if\quad\theta_1-\theta_2 > \pi }\\
{\theta_1-\theta_2 + 2\pi }&{if\quad\theta_1-\theta_2 \le  - \pi }\\
{\theta_1-\theta_2}&{else}
\end{array}} \right.
    \label{delta_star}
\end{equation}
Actually, $\delta(\theta_1,\theta_2)=|\delta^*(\theta_1,\theta_2)|$.
The operation can calculate the phase difference from $\theta_1$ to $\theta_2$ in the counterclockwise direction.
We can find that $\delta^*(\theta_1,\theta_2)\in (-\pi,\pi]$, which is difficult to to compare in algorithm.
Therefore, we use $\Psi(cos(\delta^*(\theta_1,\theta_2)),sin(\delta^*(\theta_1,\theta_2)))$ to let all the phase differences be positive.
Then, by finding the minimum of these, the agent $\alpha$ in the counterclockwise direction can be determined. In the same way, we can also get the agent $\beta$ in the other direction.

\begin{algorithm}[t]
\caption{\label{tab:neighbor_find} Neighbor Seeking Algorithm}
\hspace*{0.02in}
{\bf Input:} $I_{N_k}$, agent $i,j$ state\\
\hspace*{0.02in}
{\bf Output:} $\alpha$, $\beta$
\begin{algorithmic}[1]
        \For{$j\in I_{N_k}\&\&~j\ne i$}
        \If{$N_k = 2$}
            \State $\alpha=\beta=j$;
        \Else
        \If{$\Psi(cos(\delta^*(\varphi_{i},\varphi_{j})),sin(\delta^*(\varphi_{i},\varphi_{j})))<\Psi(cos(\delta^*(\varphi_{i},\varphi_{\alpha})),sin(\delta^*(\varphi_{i},\varphi_{\alpha})))$}
                \State $\alpha=j$;
        \EndIf \State{\bf end if}
        \If{$\Psi(cos(\delta^*(\varphi_{j},\varphi_{i})),sin(\delta^*(\varphi_{j},\varphi_{i})))<\Psi(cos(\delta^*(\varphi_{\beta},\varphi_{i})),sin(\delta^*(\varphi_{\beta},\varphi_{i})))$}
                \State $\beta = j$;
        \EndIf \State{\bf end if}
        \EndIf \State{\bf end if}
        \EndFor \State{\bf end~for}
        \State {\bf Return} $\alpha$ and $\beta$;
\end{algorithmic}
\end{algorithm}

When the neighbor agent $\alpha$ in clockwise direction changes, the division point $s_i$ should also change accordingly. The same is true for counterclockwise which does not change the division point $s_i$. When agent $\alpha$ does not change, agent $i$ can execute Algorithm \ref{tab:single_layer}. This ensures that Algorithm \ref{tab:single_layer} works efficiently.
\begin{algorithm}[t]
 \caption{\label{tab:number_and_division} Division Point Set Algorithm}
 \hspace*{0.02in} {\bf Initializate:} $z = \alpha$
\begin{algorithmic}[1]
\State Run Algorithm \ref{tab:neighbor_find};
\If{$z \ne \alpha$}
    \State{The division point is set as $s_{i}=\varphi_i-\frac{1}{2}\delta(\varphi_i,\varphi_\alpha)$;}
    \If{ $s_{i}<0$}
        \State $s_{i}=s_{i}+2\pi$;
    \EndIf \State{\bf end~if}
\Else
    \State Run Algorithm \ref{tab:single_layer};
\EndIf \State{\bf end~if}
\end{algorithmic}
\end{algorithm}

\begin{figure}[t!]
{\includegraphics[width=0.85\linewidth]{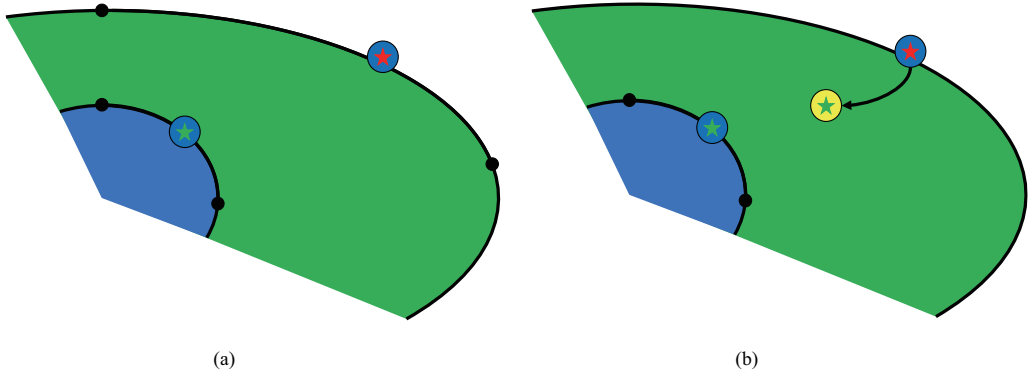}}\centering
\caption{\label{f_layer_change} Diagram of agent moving to other layer}
\end{figure}
In addition, the importance of each layer should be different from one another. In general, the more important the inner layer is.
Therefore, when the number of agents is insufficient, the inner layer should be covered first.
As shown in Fig.\ref{f_layer_change}, when the outer agent finds that the inner agent needs help, even if it is working well, it will leave the outer layer and head to the inner layer.
We use Algorithm \ref{tab:layer_change} to realize this function. When agent $i$ discovers $\varphi_i\in E_{j}$, $a_j = 1$ and $c_j = 0$ of the inner agent, the agent $i$ transforms itself into a free agent, and the inner layer is the target layer.

\begin{algorithm}[t]
\caption{\label{tab:layer_change} Weight-based Layer Change Algorithm}
\begin{algorithmic}[1]
    \If{$k_i>1$}
        \State Run Algorithm \ref{tab:number};
        \For{$j\in I_{N_{k_i-1}}$}
            \If{$\varphi_i\in E_{j}~\&\& ~a_j = 1~ \&\&~ c_j = 0$}
                    \State{Set $k_i = k_j$ and $a_i = 0$};
            \EndIf \State{\bf end if}
        \EndFor \State{\bf end~for}
    \EndIf\State{\bf end if}
\end{algorithmic}
\end{algorithm}

Finally, we present the multi-layer barrier coverage algorithm in Algorithm \ref{tab:multi_layer_coverage}.
When agent $i$ does not have a target layer, the agent will first find a target layer.
If agent $i$ cannot find the target layer with the phase unchanged, the agent will change its phase. After the agent finds the target layer, the agent moves to the target layer.
When the agent reaches the target layer, it will request to enter the target layer. If the request is rejected, the agent looks for another target layer.
When the request is granted, the agent enters the layer to perform the coverage task.
In order to ensure the smooth progress of the algorithm, the intelligent experience obtains the neighbor information at all times.
Finally, when the agent finds that the inner layer needs help, it stops coverage and helps the inner layer instead.
We use $P$ to denote the detection probability of the algorithm.
$P$ is calculated as follows
\begin{equation}\label{eq:multi_layer_P}
P = 1-\prod\limits_{k=1}^{N_k} (1-P_k),
\end{equation}
where $P_k$ is the detected probability of layer $k$.
\begin{algorithm}[t]
 \caption{\label{tab:multi_layer_coverage} Multi-layer Barrier Coverage Algorithm}
\hspace*{0.02in} {\bf Initializate:} $k=1$, $a_i=0$, $c_i=0$ \\
 For $i \in I_N$, $i$-th agent performs as follow
\begin{algorithmic}[1]
\While{$a_i=0$}
    \While{$k_i=0$ }
        \State{Run Algorithm \ref{tab:number};}
        \State{Run Algorithm \ref{tab:target_layer_find};}
        \State{Move with (\ref{u_no_target});}
    \EndWhile \State{\bf end~while}
    \While{$p_i\notin r_{k_i}$}
        \State{Move to the target layer with (\ref{u_get_target});}
    \EndWhile \State{\bf end~while}
    \State{Run Algorithm \ref{tab:number};}
    \State{Run Algorithm \ref{tab:request};}
    \While{$a_i = 1$}
        \State{Run Algorithm \ref{tab:number};}
        \State{Run Algorithm \ref{tab:number_and_division};}
        \State{Run Algorithm \ref{tab:layer_change};}
        \State Update $c_i$ with (\ref{ci}) and (\ref{eta});
    \EndWhile\State{\bf end~while}
\EndWhile \State{\bf end~while}
\end{algorithmic}
\end{algorithm}

In the next section, we will theoretically demonstrate the effectiveness of the proposed algorithm.

\section{Main Results}\label{sec:mai}
\begin{lemma}
For fixed agents position, the set of midpoints of $\mathcal{T}$ guarantees the maximum of joint monitoring probability $H$.
\label{le:division_point}
\end{lemma}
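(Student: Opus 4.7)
The plan is to view $H$ as a pointwise integral whose integrand depends only on how each $\theta$ is assigned to an agent, and then argue that the midpoint division realises the pointwise-best assignment. The standard first-order calculation via $\partial H/\partial s_{i_k}=0$ will serve as a consistency check.

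First I would rewrite
\[
H(\varphi,\mathcal{S}) = \int_{0}^{2\pi} f\bigl(d(\varphi_{\sigma(\theta)},\theta)\bigr)\,\rho(\theta)\,d\theta,
\]
where $\sigma(\theta)=i_k$ whenever $\theta\in E_{i_k}$, so $\sigma$ is determined by $\mathcal{S}$. Since $\rho(\theta)\ge 0$ and the agent phases $\{\varphi_{i_k}\}$ are fixed, maximising $H$ over $\mathcal{S}$ reduces to choosing, at almost every $\theta$, the assignment $\sigma(\theta)=i_k$ that makes $f(d(\varphi_{i_k},\theta))$ largest. Because $f$ is monotonically decreasing, this is exactly the rule ``assign $\theta$ to the agent nearest to $\theta$ in the circular phase distance $d$''.

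Next I would identify the nearest-agent rule with the midpoint partition. By the ordering convention on $\{\varphi_{i_k}\}$ and the fact that $d$ is the wrapped phase distance, only the two agents cyclically adjacent to $\theta$ can be its nearest neighbour. Hence on the arc $\mathcal{T}_{i_k}$ between $\varphi_{i_k}$ and $\varphi_{i_k+1}$, the indifference locus $d(\varphi_{i_k},\theta)=d(\varphi_{i_k+1},\theta)$ is precisely the midpoint of $\mathcal{T}_{i_k}$. Placing $s_{i_k+1}$ at this midpoint therefore makes each $E_j$ coincide with the set of phases closer to $\varphi_j$ than to any other agent, which is the pointwise optimiser identified in the first step. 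As a sanity check, differentiating directly and applying Leibniz's rule gives
\[
\frac{\partial H}{\partial s_{i_k}} = \bigl[f(d(\varphi_{i_k-1},s_{i_k})) - f(d(\varphi_{i_k},s_{i_k}))\bigr]\rho(s_{i_k}),
\]
and setting the bracket to zero, together with the strict decrease of $f$, recovers $d(\varphi_{i_k-1},s_{i_k})=d(\varphi_{i_k},s_{i_k})$, the midpoint condition; the sign of the bracket on either side of the midpoint confirms it is a maximum rather than a saddle.

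The main obstacle I anticipate is the piecewise definition of $\delta$ (and therefore $d$ and $E_{i_k}$) caused by the wrap-around at $\theta=2\pi$: both the pointwise comparison and the Leibniz differentiation must be carried out in the correct branch so that ``midpoint of $\mathcal{T}_{i_k}$'' is interpreted modulo $2\pi$. Degenerate situations where $\rho(s_{i_k})=0$ (the first-order condition is trivially satisfied) or where $f$ is only weakly monotonic (so several points achieve the indifference condition) are easily handled by noting that the midpoint still attains the maximum even if it is not the unique maximiser. Beyond this bookkeeping, the proof is essentially a one-line application of pointwise optimisation.
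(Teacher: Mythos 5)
Your proposal is correct, but it argues along a genuinely different line from the paper. The paper proceeds variationally: it differentiates $H$ with respect to the division points, obtaining $\frac{\partial H}{\partial s_i}=\left[f(d(\varphi_{i-1},s_i))-f(d(\varphi_i,s_i))\right]\rho(s_i)$, uses the monotone decrease of $f$ to conclude that stationarity forces $d(\varphi_{i-1},s_i)=d(\varphi_i,s_i)$, i.e.\ each division point sits at the midpoint of the arc between its two adjacent agents, and then certifies maximality through the diagonal Hessian $\nabla^2H=\mathrm{diag}(\alpha_1,\dots,\alpha_N)$ with $\alpha_i<0$. You instead maximise pointwise: writing $H=\int_0^{2\pi}f(d(\varphi_{\sigma(\theta)},\theta))\rho(\theta)\,d\theta$ and using $\rho\ge 0$ together with the decrease of $f$, the best admissible choice at almost every $\theta$ is the nearest agent in the wrapped phase metric; since only the two cyclically adjacent agents can be nearest, this Voronoi assignment coincides with the midpoint partition, which is a feasible choice of $\mathcal{S}$, so it is a global maximiser. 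Your route is more elementary and in fact slightly stronger: it yields a global optimum over all measurable assignments (hence over all admissible division-point configurations), needs no differentiability of $f$ in $s_i$ and no Hessian sign analysis, and is untroubled by degeneracies such as $\rho(s_i)=0$, where the paper's strict inequality $\alpha_i<0$ actually fails; your first-order computation, which matches the paper's, serves only as a consistency check. What the paper's derivative-based formulation buys is direct compatibility with the rest of the analysis: the same stationarity condition is what drives the division-point dynamics (\ref{coni_s}) and reappears in the invariant-set argument of Theorem \ref{Theorem:H}, so stating the characterisation via $\partial H/\partial s_i=0$ is the form needed later. The one step you should make explicit rather than leave as bookkeeping is the identification of the indifference locus with the arc midpoint under wrap-around: because every arc $\mathcal{T}_{i_k}$ has length at most $2\pi$ and $\delta$ saturates at $\pi$, the phase midpoint is equidistant from the two adjacent agents and is the unique switching point of the nearest-agent rule, which is exactly what makes your Voronoi partition equal the midpoint partition.
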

\begin{proof}
By taking the partial derivative of $H$ with respect to $s_i$, one gets
\begin{equation*}
    \frac{{\partial H}}{{\partial s_i}} = \left[ {f\left( {d({\varphi _{i - 1}},{s _i})} \right) - f\left( {d({\varphi _i},{s _i})} \right)} \right]\rho \left( {{s _i}} \right).
\end{equation*}
It is observed that if $f({d({\varphi _{i - 1}},{s _i})}) =f(d({\varphi _i},{s _i}))$ for $i= 1,2,...,N$, we can get $ \frac{{\partial H}}{{\partial \mathcal{S}}}=0$. According to the description of the properties of $f (\cdot)$ in Section II, this means that $ \frac{{\partial H}}{{\partial \mathcal{S}}}=0$ can be achieved with only $d({\varphi _{i - 1}},{s _i}) =d({\varphi _i},{s _i})$ for $i= 1,2,...,N$.
Since the distinct agents position, $d({\varphi _{i - 1}},{s _i}) =d({\varphi _i},{s _i})$ means division point is the midpoint of $\mathcal{T}_i$. Moreover, the Hessian matrix of the function of coverage quality (\ref{H}) satisfies
\begin{equation*}
\begin{split}
        {\nabla ^2}H &= [\frac{\partial^2U}{\partial s_i\partial s_j}]\in R^{N\times N}\\
        &=\text{diag}(\alpha_1, \alpha_2, ... ,\alpha_N ),
\end{split}
\end{equation*}
where $\alpha_i = (\frac{\partial f(d(\varphi_{i-1},s_i))}{\partial s_i}- \frac{\partial f(d(\varphi_{i},s_i))}{\partial s_i})\rho(s_i)$.
Since $f(\cdot)$ is monotonically decreasing and $d\left( {{\varphi _i},s_i } \right) \propto \delta(\varphi_i,s_i)$, combining with equation (\ref{delta}), we can get $ \frac{\partial f(d(\varphi_{i-1},s_i))}{\partial s_i}<0$ and $\frac{\partial f(d(\varphi_{i},s_i))}{\partial s_i}\rho(s_i)>0$.
This means $\alpha_i<0$ for $i=1,2,...,N$. Therefore, we can get ${\nabla ^2}H<0$, which implies this lemma.
\end{proof}

\begin{theorem}
Dynamic system (\ref{XY}) and (\ref{coni_s}) ensure that the function (\ref{H}) reach the local maximum value.
\label{Theorem:H}
\end{theorem}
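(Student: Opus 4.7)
The plan is to use $H$ itself as a Lyapunov (ascent) function and show that along the closed-loop trajectories generated by (\ref{XY})--(\ref{uri}), (\ref{omega}) and (\ref{coni_s}), we always have $\dot H \ge 0$, and then invoke LaSalle's invariance principle on the compact state manifold (angles live in $S^1$ and the radial dynamics $\dot r_{i_k}=\kappa_r(R(\varphi_{i_k})-r_{i_k})$ are globally attractive to the layer) to conclude that trajectories converge to the set where $\dot H=0$.

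The first concrete step is to expand $\dot H$ exactly as in (\ref{dh}) and substitute the two controls. For the angular part, plugging (\ref{omega}) into the first sum gives a term of the form $\kappa_\omega^{-1}\sum_{i}\omega_{i_k}^{2}\ge 0$, so that piece is nonnegative term-by-term. For the division-point part, I would combine the expression of $\partial H/\partial s_{i_k}=[f(d(\varphi_{i_k-1},s_{i_k}))-f(d(\varphi_{i_k},s_{i_k}))]\rho(s_{i_k})$ derived in the proof of Lemma~\ref{le:division_point} with the control (\ref{coni_s}) and argue, using the fact that $f$ is monotonically decreasing and $d\propto\delta$, that $\partial H/\partial s_{i_k}$ and $\dot s_{i_k}$ always carry the same sign: if $d(\varphi_{i_k},s_{i_k})>d(\varphi_{i_k-1},s_{i_k})$ then $\dot s_{i_k}>0$ and also $f(d(\varphi_{i_k-1},s_{i_k}))>f(d(\varphi_{i_k},s_{i_k}))$, so $\partial H/\partial s_{i_k}\ge 0$, and symmetrically in the opposite case. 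Therefore each summand in the second sum of (\ref{dh}) is also nonnegative, giving $\dot H\ge 0$ globally.

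Since $H$ is bounded above (it is a probability, hence $H\le 1$) and nondecreasing, it converges; applying LaSalle's invariance principle on the compact manifold where the closed-loop dynamics live, every trajectory approaches the largest invariant set contained in $\{\dot H=0\}$. On this set each nonnegative summand of (\ref{dh}) must vanish, so $\omega_{i_k}=0$ for every $i_k$ (forcing $\partial H/\partial\varphi_{i_k}=0$) and $\dot s_{i_k}=0$ for every $i_k$ (forcing $d(\varphi_{i_k-1},s_{i_k})=d(\varphi_{i_k},s_{i_k})$, i.e.\ each $s_{i_k}$ sits at the midpoint of its arc). Combined with the radial dynamics driving $r_{i_k}\to R(\varphi_{i_k})$, the equilibrium set coincides with the set of critical points of $H$ in which the division points are midpoints.

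To upgrade ``critical point'' to ``local maximum,'' I would invoke Lemma~\ref{le:division_point} for the $\mathcal{S}$-variables (which supplies a strictly negative Hessian block, $\nabla^{2}_{\mathcal{S}}H<0$, at the midpoint configuration) and combine it with the observation that the gradient-ascent condition $\omega_{i_k}=0$ together with the monotonicity of $f$ about $\varphi_{i_k}$ within its partition cell makes the agent position a local maximizer of its own contribution $\int_{E_{i_k}}f(d(\varphi_{i_k},\theta))\rho(\theta)d\theta$. Summed over $i_k$ this gives a negative semidefinite $\nabla^{2}_{\varphi}H$ at the limit, so the full Hessian is negative (semi)definite and the limit is a local maximum of $H$. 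The main obstacle I expect is the LaSalle step: one must be careful that the coupled $(\varphi,\mathcal{S})$ dynamics indeed evolve on a compact invariant set (the radial subsystem has to be driven to the layer first so that $H$ genuinely depends only on the remaining variables) and that the controls are Lipschitz enough for the invariance principle to apply; once that is secured, the sign arguments above close the proof.
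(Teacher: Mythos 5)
Your proposal follows essentially the same route as the paper's proof: expand $\dot H$ as in (\ref{dh}), note that the angular terms become nonnegative squares after substituting (\ref{omega}) and that the division-point terms are products of two factors whose signs agree because $f$ is monotonically decreasing, conclude $\dot H\ge 0$, and then invoke the invariance principle to show convergence to the set where the division points are midpoints and the angular gradients vanish. The only cosmetic difference is that the paper packages this as a Lyapunov argument with $V=1/H$ and $\dot V\le 0$ rather than using $H$ directly as a bounded nondecreasing ascent function, and your closing Hessian remark is, if anything, slightly more explicit than the paper's own final assertion that the limit is a local maximum.
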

\begin{proof}
Construct the following Lyapunov function
\begin{equation*}
V(\varphi,\mathcal{S}) = \frac{1}{H}.
\end{equation*}
Since $s_i \in [0,2\pi)$, $f(d(\varphi_i , \theta))>0$ and $\rho(\theta) \ge 0$, we can find that $V>0$. Moreover, $f(d(\varphi_i , \theta))$ and  $\rho(\theta)$ are bounded, which implies $V_1$ is bounded.
Taking the derivative of the Lyapunov function, we find that
\begin{equation*}
\dot V(\varphi,\mathcal{S}) = - \frac{1}{H^2} \cdot \dot H,
\end{equation*}
From (\ref{H}), we can find that $V(\varphi,\mathcal{S})>0$.The time derivative of (\ref{H}) with respect to the compound dynamics (\ref{XY}) and (\ref{coni_s}) is given by
\begin{equation*}
\begin{aligned}
    \dot H  &= \sum\limits_{i = 1}^N {\frac{{\partial H}}{{\partial {\varphi _i}}}{\omega _i}}  + \sum\limits_{i = 1}^N {\frac{{\partial H}}{{\partial {s _i}}}{{\dot s }_i}} \\
  & = \sum\limits_{i = 1}^N {{{\left( {\int_{{E_i}} {\frac{{\partial f\left( {d({\varphi _i},\theta )} \right)}}{{\partial {\varphi _i}}}\rho \left( \theta  \right)d\theta } } \right)}^2}} \\
  &+ {\kappa _s}\sum\limits_{i = 1}^N {\left[ {f\left( {d({\varphi _{i - 1}},{s_i})} \right) - f\left( {d({\varphi _i},{s_i})} \right)} \right]} \left( {d({\varphi _i},{s_i}) - d({\varphi _{i - 1}},{s_i})} \right)\rho \left( {{s_i}} \right)
\end{aligned}
\end{equation*}
Since $\left[ {f\left( {d({\varphi _{i - 1}},{s_i})} \right) - f\left( {d({\varphi _i},{s_i})} \right)} \right] \left( {d({\varphi _i},{s_i}) - d({\varphi _{i - 1}},{s_i})} \right) \ge 0$,
 we can find that $\frac{dH}{dt} \ge 0$. On the other hand, the derivative of Lyapuonv function satisfies $\dot V_1 \le 0$.
According to local invariant set theorem, the state of the system will converge to the set of $\{(\varphi,s) |\dot V_1 =0\}$.
From the equation (\ref{H}), we can know that $H$ is bounded. Therefore, if and only if $\frac{dH}{dt}=0$, $\dot{V_1}=0$.
And in this case, the division points are located in the middle of the arc lengths between agents. In the meantime, agents are located in the set that $\{\varphi_i|\int_{E_i}\frac{\partial f(d(\varphi_i,\theta))}{\varphi} \rho(\theta)d\theta=0 \}$. Therefore, $V$ reach the local minimum value. On the other hand, $H$ reach the local maximum value.
\end{proof}

\begin{lemma}
For a working agent $i$, the agent $i$ will never collide with the division point.
\label{lemma_agent_point_collide}
\end{lemma}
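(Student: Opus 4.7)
The plan is to argue by contradiction that $D(t):=d(\varphi_i(t),s_i(t))$, the angular arc-distance between agent $i$ and its own division point, stays strictly positive for as long as the agent is in the working state. At the moment agent $i$ becomes working (via Algorithm \ref{tab:request} or the initialization branch of Algorithm \ref{tab:number_and_division}) the division point is set to the midpoint of the arc from $\varphi_\alpha$ to $\varphi_i$, so $D(0)=\tfrac{1}{2}d(\varphi_\alpha,\varphi_i)>0$, where strict positivity follows from the phase-disambiguation loop in Algorithm \ref{tab:request}. Suppose for contradiction that there is a first time $t^{*}>0$ with $D(t^{*})=0$.

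First I would pass to a local ``unwrapped'' angular coordinate on the short arc from $\varphi_{i-1}$ to $\varphi_i$, using $\delta^{*}$ from (\ref{delta_star}) to fix signs; on an open time interval $(t^{*}-\epsilon,t^{*})$ this gives $D_1:=\varphi_i-s_i>0$ and $D_2:=s_i-\varphi_{i-1}>0$ with $D=D_1$. Differentiating and substituting the control laws in (\ref{Single_layer_con}) and (\ref{XY}) yields
\begin{equation*}
\dot D_1 \;=\; \omega_i - \dot s_i \;=\; \omega_i - \kappa_s(D_1-D_2).
\end{equation*}
As $t\uparrow t^{*}$, $D_1\to 0^{+}$ and $D_2\to d(\varphi_{i-1}(t^{*}),\varphi_i(t^{*}))=:L(t^{*})$. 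Hence $\dot s_i(t^{*})\to -\kappa_s L(t^{*})$, so the division point actively recedes from the agent at a rate of order $\kappa_s L(t^{*})$, and the remaining question is whether the agent can chase it fast enough.

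Next I would bound $|\omega_i|$ uniformly. Because $|\partial f/\partial\varphi_i|$ and $\rho$ are bounded on $[0,2\pi)$ and the integration domain in (\ref{Single_layer_con}) has total measure at most $2\pi$, there is a constant $M=M(\kappa_\omega,\|f'\|_\infty,\|\rho\|_\infty)$ with $|\omega_i|\le M$ uniformly in time. Therefore
\begin{equation*}
\liminf_{t\uparrow t^{*}}\dot D_1(t)\;\geq\;-M+\kappa_s L(t^{*}),
\end{equation*}
which is strictly positive provided $\kappa_s L(t^{*})>M$. A strictly positive lower bound on $\dot D_1$ in a left-neighborhood of $t^{*}$ is incompatible with $D_1$ having decreased to $0$ at $t^{*}$, yielding the desired contradiction.

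The main obstacle I expect is the step $L(t^{*})>0$, i.e.\ showing that the arc gap between agent $i$ and its clockwise neighbor $\alpha=i{-}1$ on the same layer cannot itself have collapsed to zero by time $t^{*}$. I would handle this by a companion monotonicity estimate: Algorithm \ref{tab:request} forbids two working agents from starting at the same phase, and one can show using (\ref{Single_layer_con}) that whenever $d(\varphi_{i-1},\varphi_i)$ becomes small, the analogous difference inequality for $d(\varphi_{i-1},\varphi_i)$ driven by $\omega_{i}-\omega_{i-1}$ forces $L$ to stay bounded below by a constant depending only on $\kappa_s$, $\kappa_\omega$, and the global bounds on $f'$ and $\rho$. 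Together with the choice $\kappa_s>M/L_{\min}$, this closes the contradiction; the signed bookkeeping via $\delta^{*}$ is a routine but necessary check to ensure the local linearization above is valid across wrap-arounds.
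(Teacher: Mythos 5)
Your overall strategy (track the arc-distance between the agent and a division point and show it cannot reach zero) is the same as the paper's, but the way you close the estimate has a genuine gap. After writing $\dot D_1=\omega_i-\kappa_s(D_1-D_2)$ you control $\omega_i$ only through a crude uniform bound $|\omega_i|\le M$, so your contradiction needs $\kappa_s L(t^*)>M$. That is a restriction on the gains that is not part of the lemma (the statement must hold for every positive $\kappa_s,\kappa_\omega$), and it additionally rests on a uniform lower bound $L_{\min}$ for the inter-agent gap $d(\varphi_{i-1},\varphi_i)$, which you only assert via a ``companion monotonicity estimate.'' No such bound is established in the paper, and it cannot simply be presupposed here: in the paper's logical order the separation of division points (Lemma \ref{le:divison_collide}) is a \emph{consequence} of this lemma, and nothing prevents two neighboring agents' phases from getting arbitrarily close depending on $\rho$ and the initial data. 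So as written the argument proves the claim only under an extra gain condition and an unproven auxiliary lemma.

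The ingredient you are missing is the sign/size structure of $\omega_i$ near the collision, which is exactly what the paper exploits. Split $E_i$ at $\varphi_i$: the part of the integral in (\ref{Single_layer_con}) over the shrinking subarc between $s_i$ and $\varphi_i$ is $O(D_1)$ (its length is $D_1$ and $\partial f/\partial\varphi_i$, $\rho$ are bounded), while over the remaining subarc between $\varphi_i$ and $s_\beta$ one has $\partial f/\partial\varphi_i>0$, so that contribution pushes the agent \emph{away} from $s_i$; together with $\kappa_s D_2\ge 0$ this gives a differential inequality of the form $\dot D_1\ge -cD_1$ near the collision, and a Gronwall/comparison argument shows $D_1$ cannot vanish in finite time for \emph{any} positive gains — no condition like $\kappa_s>M/L_{\min}$ and no lower bound on the neighbor gap are needed. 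This is precisely how the paper bounds $\dot L_\alpha>-\mathcal{K}_2\kappa_s\,\delta^*(\varphi_i,s_i)$ and $\dot L_\beta>-\mathcal{K}_1\kappa_s\,\delta^*(s_\beta,\varphi_i)$. Finally, note that your proposal only treats the agent's own division point $s_i$; the lemma (and the paper's proof) also covers the division point $s_\beta$ ahead of the agent, which you would need to handle by the symmetric estimate.
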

\begin{proof}
We assume that the agent $i$ enters the layer at time $t^*$, we can get the following relationship
\begin{equation*}
    \delta^*(s_\beta,\varphi_i)>0, \delta^*(\varphi_i,s_\alpha)>0
\end{equation*}
Let us first consider that agent $i$ will not collide with division point $s_\beta$.
As shown in Fig.\ref{f_lemma_coliside}, we use $E_i^1,E_i^2,E_i^3,E_i^4$ to denote the area between two phases.
\begin{figure}
{\includegraphics[width=0.85\linewidth]{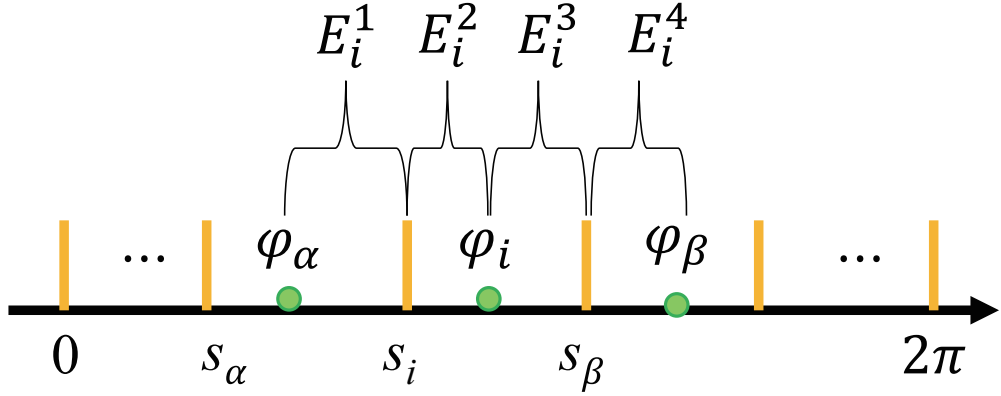}}\centering
\caption{\label{f_lemma_coliside} Diagram of the phase location of agent $i$}
\end{figure}
We use $L_\beta$ to denote the distance between agent $i$ and division point $s_\beta$, and $L_\beta$ is represented as follows
\begin{equation*}
    L_\beta = \mathcal{K}(\delta^*(s_\beta,\varphi_i)),
\end{equation*}
where $\mathcal{K}(\cdot)$ is a class $\mathcal{K}$ function.
Next, in combination with Equation (\ref{delta_star}), we take the derivative of $L_\beta$ to get
\begin{equation*}
\begin{aligned}
        \dot{L}_{\beta} & = \mathcal{K}_1\cdot(\dot{s_\beta}-\dot{\varphi_i})\\
        &=\mathcal{K}_1\cdot(\kappa_s(\delta^*(\varphi_\beta,s_\beta)-\delta^*(s_\beta,\varphi_i))-\int_{E_i}{\frac{\partial f(d(\varphi_i,\theta))}{\partial\varphi_i}\rho(\theta)d\theta})\\
        & = \mathcal{K}_1\cdot(\kappa_s(\delta^*(\varphi_\beta,s_\beta)-\delta^*(s_\beta,\varphi_i))-\int_{E_i^3}{\frac{\partial f(d(\varphi_i,\theta))}{\partial\varphi_i}\rho(\theta)d\theta}-\int_{E_i^2}{\frac{\partial f(d(\varphi_i,\theta))}{\partial\varphi_i}\rho(\theta)d\theta})
\end{aligned}
\end{equation*}
where $\mathcal{K}_1 = \frac{\partial \mathcal{K}(\delta^*(s_\beta,\varphi_i))}{\partial \delta^*(s_\beta,\varphi_i)}>0$.
As $\varphi_i\longrightarrow s_\beta$,  we can find that $\int_{E_i^3}{\frac{\partial f(d(\varphi_i,\theta))}{\partial\varphi_i}\rho(\theta)d\theta}\longrightarrow 0$, $\frac{\partial f(d(\varphi_i,\theta))}{\partial\varphi_i}<0$ in the range of $E_i^2$, and $\delta^*(\varphi_\beta,s_\beta)>0$. Therefore, we can get that
\begin{equation*}
    \dot L_\beta >-\mathcal{K}_1 \cdot \kappa_s(\delta^*(s_\beta,\varphi_i)),
\end{equation*}
which means that $L_\beta >0$ holds within the interval of $t\ge t^*$. In the same way, we use the $L_\alpha$ to denote the distance between agent $i$ and division point $s_i$, i.e.$L_\alpha=\mathcal{K}(\delta^*({\varphi_i},{s_i}))$, and we can also get the following
\begin{equation*}
\begin{aligned}
            \dot{L_i} &= \mathcal{K}_2\cdot(\dot{\varphi_i}-\dot{s_i})\\
            &=  \mathcal{K}_2\cdot(\int_{E_i}{\frac{\partial f(d(\varphi_i,\theta))}{\partial\varphi_i}\rho(\theta)d\theta}-\kappa_s(\delta^*(\varphi_i,s_i)-\delta^*(s_i,\varphi_\alpha)))\\
            &=  \mathcal{K}_2\cdot(\int_{E_i}{\frac{\partial f(d(\varphi_i,\theta))}{\partial\varphi_i}\rho(\theta)d\theta}-\kappa_s(\delta^*(\varphi_i,s_i)-\delta^*(s_i,\varphi_\alpha)))\\
            &=\mathcal{K}_2\cdot(\int_{E_i^2}{\frac{\partial f(d(\varphi_i,\theta))}{\partial\varphi_i}\rho(\theta)d\theta}+\int_{E_i^3}{\frac{\partial f(d(\varphi_i,\theta))}{\partial\varphi_i}\rho(\theta)d\theta}+\kappa_s\delta^*(s_i,\varphi_\alpha)-\kappa_s\delta^*(\varphi_i,s_i))
\end{aligned}
\end{equation*}
where $\mathcal{K}_2 = \frac{\partial \mathcal{K}(\delta^*(s_i,\varphi_i))}{\partial \delta^*(s_i,\varphi_i)}>0$.
As $\varphi_i\longrightarrow s_i$,  we can find that $\int_{E_i^2}{\frac{\partial f(d(\varphi_i,\theta))}{\partial\varphi_i}\rho(\theta)d\theta}\longrightarrow 0$, $\frac{\partial f(d(\varphi_i,\theta))}{\partial\varphi_i}>0$ in the range of $E_i^3$, and $\delta^*(s_i,\varphi_\alpha)>0$. Therefore, we can get that
\begin{equation*}
    \dot L_\alpha >-\mathcal{K}_2\cdot\kappa_s (\delta^*(\varphi_i,s_i)),
\end{equation*}
which means that $L_\alpha >0$ holds within the interval of $t\ge t^*$. Because of $L_\alpha>0$ and $L_\beta>0$, the agent will not collide with the division point.
\end{proof}

\begin{lemma}
The division points never collide with each other.
\label{le:divison_collide}
\end{lemma}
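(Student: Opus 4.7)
The plan is to bootstrap from Lemma~\ref{lemma_agent_point_collide}: since every working agent $i$ sits strictly between its two bounding division points $s_i$ and $s_\beta$, a collision of those two division points would have to squeeze the agent onto one of them, contradicting the previous lemma. I would therefore obtain the non-collision of division points almost as a corollary, and reinforce it with a differential argument mirroring the proof of Lemma~\ref{lemma_agent_point_collide} so that the result is in the same style as what came before.

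\textbf{Key steps.} First I would record the initial interleaved cyclic order produced by Algorithm~\ref{tab:request} and Algorithm~\ref{tab:number_and_division}, namely $s_1,\varphi_1,s_2,\varphi_2,\ldots,s_{N_k},\varphi_{N_k}$ listed counterclockwise around the layer. Lemma~\ref{lemma_agent_point_collide} guarantees that no agent ever crosses one of its bounding division points, so this interleaving is preserved for all $t\ge t^*$; in particular $\delta^*(s_\beta,\varphi_i)>0$ and $\delta^*(\varphi_i,s_i)>0$ hold throughout. For two adjacent division points $s_i$ and $s_\beta$, the counterclockwise arc from $s_i$ through $\varphi_i$ to $s_\beta$ has length $\delta^*(\varphi_i,s_i)+\delta^*(s_\beta,\varphi_i)>0$, which immediately forces $s_i\ne s_\beta$. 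To match the differential style of the previous proof, I would set $L_s=\mathcal{K}(\delta^*(s_\beta,s_i))$ for a class-$\mathcal{K}$ function $\mathcal{K}$, differentiate along (\ref{coni_s}) applied to both $s_i$ and $s_\beta$, and verify that as $\delta^*(s_\beta,s_i)\to 0$ the vanishing terms $d(\varphi_i,s_i)$ and $d(\varphi_i,s_\beta)$ are dominated by the strictly positive residuals $d(\varphi_\alpha,s_i)$ and $d(\varphi_\beta,s_\beta)$, yielding a lower bound of the form $\dot L_s>-c\,\delta^*(s_\beta,s_i)$ that keeps $L_s$ bounded away from $0$. The extension to any non-adjacent pair $s_i,s_j$ is by the same wedge argument applied to an intermediate agent.

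\textbf{Main obstacle.} The technical nuisance is the modular wrap-around baked into the definition of $\delta^*$ in (\ref{delta_star}): the identity $\delta^*(s_\beta,s_i)=\delta^*(\varphi_i,s_i)+\delta^*(s_\beta,\varphi_i)$ is valid only while the right-hand sum lies in $(-\pi,\pi]$, i.e.\ as long as the counterclockwise sector of agent $i$ has arc length at most $\pi$. The clean way around this is to work with the preserved interleaved cyclic ordering rather than with raw $\delta^*$ values, interpreting each sector's arc length as a well-defined quantity in $(0,2\pi)$; a continuity plus first-time-of-collision argument together with Lemma~\ref{lemma_agent_point_collide} then rules out any collapse of these arc lengths to $0$, completing the proof.
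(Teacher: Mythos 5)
Your core argument is exactly the paper's: the arc between two adjacent division points is the sum $L_\alpha+L_\beta$ of agent $i$'s distances to its bounding division points, both strictly positive by Lemma~\ref{lemma_agent_point_collide}, so the division points cannot collide. The additional differential bound on $\delta^*(s_\beta,s_i)$ and the explicit treatment of non-adjacent pairs and wrap-around are harmless reinforcements the paper does not bother with, but the route is essentially identical.
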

\begin{proof}
From lemma \ref{lemma_agent_point_collide}, we can know that the distance between division point $s_i$ and $s_\beta$ can be denoted as $L_i=L_\alpha+L_\beta$. Obviously, $L_i$ is the length of $E_i$. Therefore, we can get that $L_i>0$ holds within the interval of $t\ge t^*$, which implies this lemma.
\end{proof}

We use $L^k$ to denote the length of layer $k$. To demonstrate our conclusion, we discover the following lemmas.
\begin{lemma}
For agent $i$ working at layer $k$, if $L_i= min\{j\in I_{N_k}|L_j\}$, we have $L_i\le \frac{L^k}{N_k}$.
\label{le:min_l}
\end{lemma}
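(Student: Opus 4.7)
The plan is to prove this by a direct pigeonhole/averaging argument, leveraging the fact that the sub-regions $\{E_j\}_{j\in I_{N_k}}$ form a partition of the entire layer $k$.

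First I would verify that the sub-arcs $\{E_j\}_{j\in I_{N_k}}$ cover layer $k$ exactly once. By the construction in equation (\ref{Ei}), consecutive division points $s_j, s_{j+1}$ (with $s_{N_k+1}=s_1$) carve out disjoint arcs whose union is the whole closed curve $R_k(\theta),\theta\in[0,2\pi)$. Lemma \ref{le:divison_collide} guarantees that no two division points coincide during operation, so the partition is non-degenerate for all $t\ge t^*$. Consequently, writing $L_j$ for the arc length of $E_j$ along layer $k$, the identity
\begin{equation*}
\sum_{j\in I_{N_k}} L_j \;=\; L^k
\end{equation*}
holds.

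Next I would invoke the averaging principle. Since $L_i=\min\{L_j:j\in I_{N_k}\}$, each term in the sum satisfies $L_j\ge L_i$, so
\begin{equation*}
L^k \;=\; \sum_{j\in I_{N_k}} L_j \;\ge\; N_k\cdot L_i,
\end{equation*}
from which $L_i\le L^k/N_k$ follows immediately upon dividing by $N_k>0$.

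There is no real obstacle here; the only subtlety is confirming that the partition property used in the first step is preserved dynamically. This is already secured by Lemma \ref{lemma_agent_point_collide} (agents never hit their own division points) and Lemma \ref{le:divison_collide} (division points never merge), so the sub-arc lengths remain strictly positive and their sum remains equal to the fixed total length $L^k$ of layer $k$ throughout the algorithm's execution.
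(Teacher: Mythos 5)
Your proof is correct and follows essentially the same route as the paper: both use the fact that the sub-regions $E_j$ partition layer $k$ so that $\sum_{j\in I_{N_k}} L_j = L^k$, then apply the minimality of $L_i$ to get $L^k \ge N_k L_i$. The only cosmetic difference is that you additionally cite Lemma~\ref{lemma_agent_point_collide} and equation~(\ref{Ei}) to justify the partition's non-degeneracy, which the paper attributes to the division-point construction and Lemma~\ref{le:divison_collide}.
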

\begin{proof}
From Table \ref{tab:number_and_division}, The region of the $k$-th layer will be divided without remainder by the agents working in the $k$-th layer. Therefore, we can get the following relation
\begin{equation*}
    L^k = \sum\limits_{j\in I_{N_k}} L_j.
\end{equation*}
From Lemma \ref{le:divison_collide}, we have $L_i>0$, for $i\in I_{N_k}$. Since $L_i= min\{j\in I_{N_k}|L_j\}$, we have $L_j\ge L_i$, for $j\in I_{N_K}$. Therefore, the above formula can be rewritten as
\begin{equation*}
    L^k = \sum\limits_{j\in I_{N_k}}L_j\ge N_k\times L_i
\end{equation*}
which means that $L_i\le \frac{L_k}{N_k}$.
\end{proof}

\begin{lemma}
For agent $i$ working at layer $k$, as $t\longrightarrow \infty$, if $L_i= max\{j\in I_{N_k}|L_j\}$ and $N_k\ge 2$, we have $\frac{L^k}{N_k}\le L_i\le \frac{L^k}{2}$.
\label{le:max_l}
\end{lemma}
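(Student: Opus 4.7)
The plan is to exploit the equilibrium structure guaranteed by Lemma~\ref{le:division_point} together with the convergence result of Theorem~\ref{Theorem:H}. As $t \to \infty$, the trajectory is driven into the set on which every division point sits at the midpoint of the corresponding $\mathcal{T}$-region, so that the region $E_i$ assigned to agent $i$ consists of the half of $\mathcal{T}_{i-1}$ adjacent to $\varphi_i$ together with the half of $\mathcal{T}_i$ adjacent to $\varphi_i$. Writing $\ell_j$ for the arc length of $\mathcal{T}_j$, this produces the identity
\[
L_i \;=\; \tfrac{1}{2}\bigl(\ell_{i-1} + \ell_i\bigr),
\]
from which both halves of the claim drop out almost immediately.

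First I would establish the lower bound $L^k/N_k \le L_i$. Since the $\{E_j\}_{j\in I_{N_k}}$ partition layer $k$, one has $\sum_{j\in I_{N_k}} L_j = L^k$, and because $L_i$ is the maximum of these $N_k$ nonnegative summands, it cannot fall below their mean $L^k/N_k$. This is the mirror image of the argument in Lemma~\ref{le:min_l} and requires no new ideas.

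Next, for the upper bound $L_i \le L^k/2$, I would use that the $\mathcal{T}$-regions themselves partition layer $k$, giving $\sum_{j\in I_{N_k}} \ell_j = L^k$ with each $\ell_j \ge 0$. Whenever $N_k \ge 2$, the remaining $N_k - 2$ arc lengths $\ell_j$ with $j \notin \{i-1, i\}$ are nonnegative, hence $\ell_{i-1} + \ell_i \le L^k$; plugging this into the midpoint identity yields $L_i \le L^k/2$. The borderline case $N_k = 2$ is a useful sanity check: the cyclic indices wrap around so that the two arcs together cover the whole layer and the bound is attained with equality.

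The main obstacle I anticipate is justifying that the midpoint identity really is attained as a limit, rather than merely at an isolated equilibrium. For this I would invoke the invariant-set part of Theorem~\ref{Theorem:H} to argue that the trajectory converges into the level set on which $\dot V = 0$, and then combine it with Lemma~\ref{le:division_point} to characterise that set as precisely the configurations for which $s_{i+1}$ is the midpoint of $\mathcal{T}_i$. Once this limiting geometry is in hand, both bounds follow by the elementary counting above, and the rest is bookkeeping with the cyclic indices.
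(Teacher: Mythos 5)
Your proposal is correct and follows essentially the same route as the paper: the lower bound comes from $L_i$ being the maximum of the $N_k$ summands of $\sum_j L_j = L^k$ (mirroring Lemma~\ref{le:min_l}), and the upper bound comes from the asymptotic midpoint identity $L_i = \tfrac12(\ell_{i-1}+\ell_i)$ together with $\sum_j \ell_j = L^k$, which is exactly the paper's $L_i = 0.5(l_i + l_\alpha) \le L^k/2$ argument. Your extra care in invoking Theorem~\ref{Theorem:H} and Lemma~\ref{le:division_point} to justify the limiting midpoint configuration matches (and slightly sharpens) the paper's brief citation of the same facts.
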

\begin{proof}
Similar to Lemma \ref{le:min_l}, since $L_i= max\{j\in I_{N_k}|L_j\}$, we have $L_j\le L_i$ for $j\in I_{N_K}$,
which means that $L_i\ge \frac{L_k}{N_k}$.

As shown in Figure \ref{f_lemma_coliside}, we use $l_i$ to denote the length of $E_i^3\cup E_i^4$.
We can get the following relation
\begin{equation*}
    L^k =\sum\limits_{j\in I_{N_k}}l_j.
\end{equation*}
From Lemma \ref{le:division_point} and control input (\ref{coni_s}), as $t\longrightarrow \infty$, the agent $i$ have following relation
\begin{equation*}
    L_i = 0.5(l_i+l_\alpha),
\end{equation*}
Since $l_i+l_\alpha\le L^k$, we get $L_i \le  \frac{L^k}{2} $.
\end{proof}

In our algorithm, the number of agents on the $k$-th layer is not fixed. Obviously, we can get a relation as follows $P_k(t)\ge 0$, for $t\ge 0$.
\begin{lemma}
For the layer $k$, if the agent $i$ leaves this layer and $N_k\ge 2$, the maximal reduction of the detect probability of the $k$-th layer can be calculated as follows
\begin{equation*}
\begin{aligned}
     {P_k}^\prime  &= \int_{E_i^2} {\left( {f\left( {d\left( {{\varphi _i},\theta } \right)} \right) - f\left( {d\left( {{s_i},{\varphi _i}} \right) + d\left( {{s_i},\theta } \right)} \right)} \right)\rho \left( \theta  \right)d\theta   } \\
     &+\int_{E_i^3} {\left( {f\left( {d\left( {{\varphi _i},\theta } \right)} \right) - f\left( {d\left( {{s_\beta },{\varphi _i}} \right) + d\left( {{s_\beta },\theta } \right)} \right)} \right)\rho \left( \theta  \right)d\theta }
\end{aligned}
\end{equation*}
    \label{le:delta_Pk}
\end{lemma}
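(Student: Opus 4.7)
The plan is to compute the exact drop in detect probability when agent $i$ is removed, then bound it above using the metric nature of $d$ together with the midpoint characterization of the division points. First I would recall from the proof of Lemma \ref{lemma_agent_point_collide} that the sub--region $E_i = [s_i,s_\beta]$ of agent $i$ splits at $\varphi_i$ into the two arcs $E_i^2 = [s_i,\varphi_i]$ and $E_i^3 = [\varphi_i,s_\beta]$. When agent $i$ exits, these two arcs must be absorbed into the responsibilities of the nearest remaining neighbours in layer $k$, namely $\alpha$ on the $s_i$ side and $\beta$ on the $s_\beta$ side. Since $N_k \ge 2$, both $\alpha$ and $\beta$ exist and the $k$-th layer remains partitioned without gaps.

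Next I would write the exact reduction in $P_k$ as the difference between what agent $i$ used to contribute on $E_i$ and what $\alpha,\beta$ will contribute on the same points after the reassignment. Concretely,
\begin{equation*}
\Delta P_k = \int_{E_i^2}\!\bigl(f(d(\varphi_i,\theta)) - f(d(\varphi_\alpha,\theta))\bigr)\rho(\theta)d\theta + \int_{E_i^3}\!\bigl(f(d(\varphi_i,\theta)) - f(d(\varphi_\beta,\theta))\bigr)\rho(\theta)d\theta.
\end{equation*}
To turn this into the expression claimed as the \emph{maximal} reduction, I would now upper bound each integrand by making $f(d(\varphi_\alpha,\cdot))$ and $f(d(\varphi_\beta,\cdot))$ as small as possible. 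Because $d$ is proportional to the arc distance $\delta$ in (\ref{delta}) it satisfies the triangle inequality along the circle, so for $\theta \in E_i^2$ one has $d(\varphi_\alpha,\theta) \le d(\varphi_\alpha,s_i) + d(s_i,\theta)$, and symmetrically for $\theta \in E_i^3$ one has $d(\varphi_\beta,\theta) \le d(\varphi_\beta,s_\beta) + d(s_\beta,\theta)$. Monotonic decrease of $f$ then flips each inequality and shrinks the subtracted terms, which is exactly what \emph{maximizing} the reduction requires.

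Finally, to replace the quantities $d(\varphi_\alpha,s_i)$ and $d(\varphi_\beta,s_\beta)$ by $d(s_i,\varphi_i)$ and $d(s_\beta,\varphi_i)$, I would invoke Lemma \ref{le:division_point}: at the configurations the coverage algorithm is driven to, the division points sit at the midpoints of the arcs between neighbouring agents, so $d(\varphi_\alpha,s_i)=d(s_i,\varphi_i)$ and $d(\varphi_\beta,s_\beta)=d(s_\beta,\varphi_i)$. Substituting these equalities into the bound produced in the previous paragraph yields exactly the stated expression for $P_k'$. The main obstacle I expect is justifying the use of the midpoint identity in the ``maximal'' claim: strictly it only holds at the stationary configuration guaranteed by Theorem \ref{Theorem:H}, so I would make precise that $P_k'$ is the worst-case reduction evaluated in this asymptotic regime, which matches how the quantity is later used in the multi--layer analysis.
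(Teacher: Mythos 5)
Your proposal is correct and follows essentially the same route as the paper: restrict attention to $E_i^2$ and $E_i^3$, write the reduction under reassignment of these arcs to the neighbours $\alpha$ and $\beta$, then combine the triangle inequality for the arc distance with the monotone decrease of $f$ and the midpoint property of the division points (Lemma \ref{le:division_point} and the convergence of the single-layer algorithm) to replace $d(\varphi_\alpha,s_i)$ and $d(\varphi_\beta,s_\beta)$ by $d(s_i,\varphi_i)$ and $d(s_\beta,\varphi_i)$, yielding the stated upper bound on the loss. Your closing remark that the midpoint identity is only valid in the converged (stationary) regime matches the paper's implicit reliance on the single-layer algorithm's convergence.
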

\begin{proof}
    In our algorithm, when the agent $i$ enters or leaves, there is only a change in the monitoring probability of $E_i^2$ and $E_i^3$ for all regions in layer $k$.
    We assume that agent i leaves layer k at time ti, and the new division point is at the position of $\varphi_i$. From the Lemma \ref{le:division_point}, then we can get the following formula
    \begin{equation*}
    \begin{aligned}
    P_k(t_i+\varepsilon) &\ge P_k(t_i) -\int_{E_i^2} {\left( {f\left( {d\left( {{\varphi _i},\theta } \right)} \right) - f\left( {d\left( {{\varphi _\alpha },\theta } \right)} \right)} \right)\rho \left( \theta  \right)d\theta } \\
    &+ \int_{E_i^3} {\left( {f\left( {d\left( {{\varphi _i},\theta } \right)} \right) - f\left( {d\left( {{\varphi _\beta },\theta } \right)} \right)} \right)\rho \left( \theta  \right)d\theta },
    \end{aligned}
    \end{equation*}
    where $\varepsilon$ is an infinitesimal.
     From Table \ref{tab:single_layer}, we can know that $s_i$ and $s_\beta$ will converge to the midpoint of $l_\alpha$ and $l_i$.
    As shown in Fig.\ref{f_lemma_coliside}, the length of $E_i^1$ is the same as that of $E_i^2$, and the length of $E_i^3$ is the same as that of $E_i^4$.
    Therefore, the variation of the detect probability of the $k$-th layer can rewritten as follows
    \begin{equation*}
\begin{aligned}
     {P_k}^\prime  &\ge \int_{E_i^2} {\left( {f\left( {d\left( {{\varphi _i},\theta } \right)} \right) - f\left( {d\left( {{s_i},{\varphi _i}} \right) + d\left( {{s_i},\theta } \right)} \right)} \right)\rho \left( \theta  \right)d\theta   } \\
     &+\int_{E_i^3} {\left( {f\left( {d\left( {{\varphi _i},\theta } \right)} \right) - f\left( {d\left( {{s_\beta },{\varphi _i}} \right) + d\left( {{s_\beta },\theta } \right)} \right)} \right)\rho \left( \theta  \right)d\theta }   ,
\end{aligned}
\end{equation*}
which implies this lemma.
\end{proof}
\begin{lemma}
\label{le:delta_PK_increase}
    For the layer $k$, if the agent $i$ enters this layer, the minimal increase of the detect probability of the $k$-th layer can be calculated as follows
    \begin{equation*}
    \begin{aligned}
P_k^\prime  = \int_{E_i^2 \cup E_i^3} {f\left( {d\left( {{\varphi _i},\theta } \right)} \right)\rho \left( \theta  \right)d\theta }  - \int_{s_\beta ^*}^{{s_\beta }} {f\left( {d\left( {{\varphi _\beta },\theta } \right)} \right)\rho \left( \theta  \right)d\theta }  - \int_{{s_i}}^{s_\beta ^*} {f\left( {d\left( {{\varphi _\alpha },\theta } \right)} \right)\rho \left( \theta  \right)d\theta }
    \end{aligned}
    \end{equation*}
   where $s_\beta^*$ is the division point in $l_\alpha$ before agent $i$ enters layer $k$, and if $N_k=0$, then $d(\varphi_\alpha,s_i)=0$, $d(\varphi_\alpha,\theta)=0$.
\end{lemma}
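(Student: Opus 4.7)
The plan is to decompose layer $k$ into sub-regions, identify which ones change monitoring assignment when agent $i$ enters, and compute the resulting difference in detection probability. By Lemma \ref{le:divison_collide} together with Algorithm \ref{tab:number_and_division}, when agent $i$ enters between its neighbors $\alpha$ and $\beta$, the single pre-existing division point $s_\beta^*$ that separated $\alpha$ from $\beta$ is replaced by two new division points $s_i$ (between $\alpha$ and $i$) and $s_\beta$ (between $i$ and $\beta$). Every other agent's region on layer $k$ is untouched, so the change is confined to the arc $[s_\alpha, s_{\beta^+}]$, and $P_k'$ is the sum of differences over just this arc.

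First I would write the before-entry monitoring contribution over this arc as $\int_{s_\alpha}^{s_\beta^*} f(d(\varphi_\alpha,\theta))\rho(\theta)d\theta + \int_{s_\beta^*}^{s_{\beta^+}} f(d(\varphi_\beta,\theta))\rho(\theta)d\theta$, and the after-entry contribution as $\int_{s_\alpha}^{s_i} f(d(\varphi_\alpha,\theta))\rho(\theta)d\theta + \int_{s_i}^{s_\beta} f(d(\varphi_i,\theta))\rho(\theta)d\theta + \int_{s_\beta}^{s_{\beta^+}} f(d(\varphi_\beta,\theta))\rho(\theta)d\theta$. Subtracting, the integrals over $[s_\alpha,s_i]$ and $[s_\beta,s_{\beta^+}]$ cancel identically, and using the identification $E_i^2\cup E_i^3 = [s_i,s_\beta]$ from Fig.\ref{f_lemma_coliside}, what remains is exactly the stated formula: agent $i$'s new contribution on $E_i^2\cup E_i^3$, minus what $\alpha$ used to contribute on $[s_i,s_\beta^*]$, minus what $\beta$ used to contribute on $[s_\beta^*,s_\beta]$. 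The degenerate case $N_k=0$ follows from the stated convention $d(\varphi_\alpha,\cdot)=0$: with no pre-existing neighbors there is nothing to subtract, and $P_k'$ reduces to $i$'s integral over the whole layer.

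Next I would justify the qualifier \emph{minimal}. The computation above captures only the change at the instant of entry, when the division-point update has been applied but the agent positions $\varphi_\alpha,\varphi_i,\varphi_\beta$ have not yet re-optimized. By Theorem \ref{Theorem:H}, subsequent evolution under \eqref{XY} and \eqref{coni_s} drives $H$ monotonically nondecreasingly toward a local maximum, so the eventual increase in $P_k$ is at least this immediate increase. The main obstacle will be keeping the counterclockwise ordering $s_\alpha < s_i < s_\beta^* < s_\beta < s_{\beta^+}$ consistent when the arc wraps across $\theta=0$, and verifying that the two new division points really straddle the old $s_\beta^*$ so that the cancellations above are valid; I would handle the first issue by expressing all arc lengths through the signed phase-difference operator \eqref{delta_star}, and the second by invoking the midpoint property from Lemma \ref{le:division_point}, which places $s_i$ and $s_\beta$ on opposite sides of $\varphi_i$ and hence on opposite sides of $s_\beta^*$.
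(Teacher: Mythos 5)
Your proposal is correct and follows essentially the same route as the paper's own proof: restrict attention to the affected region $E_i^2\cup E_i^3$, compare the old coverage by $\varphi_\alpha$ on $[s_i,s_\beta^*]$ and $\varphi_\beta$ on $[s_\beta^*,s_\beta]$ with agent $i$'s new contribution, and invoke Theorem~\ref{Theorem:H} to justify that this instantaneous change is the \emph{minimal} increase. Your added care about the ordering $s_i<s_\beta^*<s_\beta$ and the wrap-around via $\delta^*$ only makes explicit what the paper leaves implicit, so no substantive difference remains.
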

\begin{proof}
    As Lemma \ref{le:delta_Pk} said, agent entry will only change the detect probabilities of $E_i^2$ and $E_i^3$ in the $k$-th layer.
    Assuming that the agent enters layer $k$ after time $t_i$, We can know the detect probability of this area as follows
\[P_k(t_i)=P_k^*(t_i)+\int_{{s_i}}^{s_\beta ^*} {f\left( {d\left( {{\varphi _\alpha },\theta } \right)} \right)\rho \left( \theta  \right)d\theta }  + \int_{s_\beta ^*}^{{s_\beta }} {f\left( {d\left( {{\varphi _\beta },\theta } \right)} \right)\rho \left( \theta  \right)d\theta } \]
where $P_k^*$ indicates that the $k$-th layer does not consider the monitoring probability of $E_i^2$ and $E_i^3$.
After the agent $i$ enters the $k$ layer, from Theorem \ref{Theorem:H} the above formula is rewritten as
\[{P_k}({t_i} + \varepsilon ) \ge P_k^*({t_i} + \varepsilon ) + \int_{E_i^2 \cup E_i^3} {f\left( {d\left( {{\varphi _i},\theta } \right)} \right)\rho \left( \theta  \right)d\theta } \]
where $P_k^*({t_i} + \varepsilon )=P_k^*({t_i})$.
Therefore, we can get $P_k^\prime$ as follows
\[P_k^\prime \ge \int_{E_i^2 \cup E_i^3} {f\left( {d\left( {{\varphi _i},\theta } \right)} \right)\rho \left( \theta  \right)d\theta }  - \int_{s_\beta ^*}^{{s_\beta }} {f\left( {d\left( {{\varphi _\beta },\theta } \right)} \right)\rho \left( \theta  \right)d\theta }  - \int_{{s_i}}^{s_\beta ^*} {f\left( {d\left( {{\varphi _\alpha },\theta } \right)} \right)\rho \left( \theta  \right)d\theta } \]
which implies this lemma.
\end{proof}
According to the above conclusions, we can get the following theorem
\begin{theorem}
For a multi-agent multi-layer barrier coverage system with $N_k$ layers, if the agent $i$ working on the $k$-th layer satisfies the following inequality,
\begin{equation*}
P_k^\prime < \frac{(1-P_k)P_v^\prime}{1-P_v-P_v^\prime}
\end{equation*}
the detection probability (\ref{eq:multi_layer_P}) of the system will increase if the agent $i$ enters the $v$-th layer.
\end{theorem}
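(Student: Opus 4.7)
The plan is to compare the total detection probability before and after agent $i$ migrates from layer $k$ to layer $v$ by direct algebraic manipulation of formula (\ref{eq:multi_layer_P}), using Lemmas \ref{le:delta_Pk} and \ref{le:delta_PK_increase} to quantify the worst-case change on the two affected layers while all other layers are untouched.

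First, I would denote by $P_{\text{old}} = 1 - \prod_{j}(1-P_j)$ the detection probability before the move, and write the post-migration layer probabilities as $P_k^{\text{new}} \geq P_k - P_k'$ (Lemma \ref{le:delta_Pk} bounds the loss from above) and $P_v^{\text{new}} \geq P_v + P_v'$ (Lemma \ref{le:delta_PK_increase} bounds the gain from below). Plugging these into (\ref{eq:multi_layer_P}) yields an explicit lower bound on $P_{\text{new}}$ in which only the factors corresponding to layers $k$ and $v$ have changed.

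Next, I would show that $P_{\text{new}} > P_{\text{old}}$ is equivalent, after cancelling the common factor $\prod_{j\ne k,v}(1-P_j)>0$ and expanding, to
\begin{equation*}
(1-P_k+P_k')(1-P_v-P_v') < (1-P_k)(1-P_v),
\end{equation*}
which simplifies to $P_k'(1-P_v-P_v') < (1-P_k)P_v'$. Dividing by $(1-P_v-P_v')$ recovers exactly the hypothesis of the theorem, completing the argument.

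The main obstacle is to justify the final division step: one must verify that $1-P_v-P_v' > 0$, equivalently that the post-entry layer-$v$ probability stays strictly below $1$, so that the sense of the inequality is preserved. This follows from the fact that $P_v^{\text{new}}$ is itself a probability, together with the strict monotonicity and strict positivity of $f(\cdot)<1$ noted in Section~\ref{sec:pro}, which rules out the degenerate case of perfect detection on an entire sub-arc. Once this sign condition is established, the rest of the proof is routine algebra.
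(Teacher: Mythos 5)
Your proposal is correct and follows essentially the same route as the paper: compare the product form of (\ref{eq:multi_layer_P}) before and after the migration, bound the change in the two affected factors via Lemmas \ref{le:delta_Pk} and \ref{le:delta_PK_increase}, cancel the unchanged factors, and reduce the resulting inequality to the stated condition. Your explicit verification that $1-P_v-P_v'>0$ before dividing is a point the paper glosses over, but it is a refinement rather than a divergence in method.
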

\begin{proof}
    Without loss of generality, we can assume that when the multi-agent coverage system is at time $t_1$, agent $i$ works at layer $k$; when the system is at time $t_2$, agent $i$ works at layer $v$. And, at the two moments, except that the working place of agent $i$ is different, other agents are still working in the same layer.
    Therefore, we can get the following equation by (\ref{eq:multi_layer_P})
\begin{equation*}
    P(t_1) = 1-(1-P_1)(1-P_2)...(1-P_k)...(1-P_v)...(1-P_{N_k}).
\end{equation*}
From Lemma \ref{le:delta_Pk} and Lemma \ref{le:delta_PK_increase}, we can get the following equation
\begin{equation*}
    P(t_2) \ge 1-(1-P_1)(1-P_2)...(1-P_k+P_k\prime)...(1-P_v-P_v\prime)...(1-P_{N_k})
\end{equation*}
let $1-(1-P_1)(1-P_2)...(1-P_k+P_k\prime)...(1-P_v-P_v\prime)...(1-P_{N_k}))>P(t_1)$, we can get $P(t_1)>P(t_2)$, which implies this theorem.
Simplify the above formula to get
\begin{equation*}
    P_k^\prime < \frac{(1-P_k)P_v^\prime}{1-P_v-P_v^\prime}
\end{equation*}
This completes the proof.
\end{proof}

\begin{coro}
For a single-layer barrier coverage system with fixed division points, when the layer is a circle with a radius $R_0$, $d$ adopts the geodesic distance obtained on the layer and the detection model of the agent is a Gaussian probability model, i.e.
$f(d)=e^{-d^2/\gamma^2}$ if the radius $R_0$ satisfies $R_0\le\frac{\sqrt{2}\gamma}{2\pi}$, dynamic system (\ref{XY}) ensure that the function (\ref{H}) reaches the maximum value.
\end{coro}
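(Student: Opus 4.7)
The plan is to exploit the Gaussian model and the circle geometry to promote Theorem \ref{Theorem:H} from convergence to a local maximum to convergence to the global maximum, via concavity of $H$ in the angular coordinates. With the division points $\mathcal{S}$ frozen, the objective (\ref{H}) decouples as $H(\varphi) = \sum_{i=1}^{N} H_i(\varphi_i)$, where $H_i(\varphi_i) := \int_{E_i} f(d(\varphi_i,\theta))\rho(\theta)\,d\theta$ depends on only a single angular variable. The radial equation in (\ref{XY}) drives $r_i \to R_0$, keeping the agents on the circle, and the phase equation $\dot\varphi_i = \omega_i$ from (\ref{omega}) performs coordinate-wise gradient ascent on $H$, so Theorem \ref{Theorem:H} (specialized to $\dot s_i \equiv 0$) already delivers convergence to a critical point of $H$. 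The remaining task is to rule out non-maximal critical points.

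The key step is a pointwise concavity calculation. On the circle of radius $R_0$ the geodesic distance is $d(\varphi_i,\theta) = R_0\,\delta(\varphi_i,\theta)$ with $\delta \in [0,\pi]$. Away from the antipodal locus $\varphi_i = \theta + \pi$, $\delta$ is piecewise linear in $\varphi_i$ with $|\partial_{\varphi_i}\delta| = 1$ and $\partial_{\varphi_i}^2 (\delta^2) = 2$, so the chain rule applied to $f(d) = e^{-d^2/\gamma^2}$ produces
\[
\partial_{\varphi_i}^2 f(d(\varphi_i,\theta)) \;=\; \frac{2R_0^2}{\gamma^2}\,f(d)\left(\frac{2R_0^2\delta^2}{\gamma^2} - 1\right),
\]
which is non-positive iff $R_0\,\delta \le \gamma/\sqrt{2}$. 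Since $\delta$ can be as large as $\pi$, the tightest requirement is $R_0\,\pi \le \gamma/\sqrt{2}$, which is exactly equivalent to the hypothesis $R_0 \le \sqrt{2}\,\gamma/(2\pi)$.

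With pointwise concavity of the integrand and $\rho(\theta) \ge 0$, each $H_i$ is concave in $\varphi_i$, and consequently the decoupled sum $H = \sum_i H_i$ is concave in $(\varphi_1,\ldots,\varphi_N)$. For concave functions every critical point is a global maximizer, so combining with the convergence already guaranteed by Theorem \ref{Theorem:H} yields that the dynamics (\ref{XY}) drive $H$ to its maximum value. The main obstacle is the non-smoothness of $\delta(\cdot,\theta)$ at the antipode, which affects only a measure-zero set of $\theta$ for each fixed $\varphi_i$; a standard approximation argument (e.g., excising an $\varepsilon$-neighborhood of the antipodal diagonal, establishing concavity for the truncated integral, and sending $\varepsilon \to 0$, or smoothing $\delta^2$ by a function with the same monotonicity and an equal second-derivative bound) transfers the pointwise inequality through the integral without spoiling concavity of $H_i$.
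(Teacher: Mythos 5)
Your core computation is the same as the paper's: you differentiate the Gaussian composed with the geodesic distance twice in $\varphi_i$, obtain $\partial^2_{\varphi_i} f \propto f\cdot\bigl(\tfrac{2R_0^2\delta^2}{\gamma^2}-1\bigr)$, bound $\delta$ by $\pi$ so that the radius condition $R_0\le\frac{\sqrt 2\gamma}{2\pi}$ makes the diagonal Hessian of $H$ negative, and then upgrade the convergence of Theorem \ref{Theorem:H} to convergence to the maximum. In fact your use of $\delta^2$ handles the corner at $\theta=\varphi_i$ more cleanly than the paper, which has to split the integral there.

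The step that does not survive scrutiny is the claim that each $H_i$ (and hence $H$) is \emph{globally} concave in the angular variable, with the antipodal non-smoothness disposed of by excision or smoothing. A non-constant $2\pi$-periodic function can never be concave on the whole circle: if $H_i''\le 0$ everywhere then $H_i'$ is periodic and non-increasing, hence constant, hence $H_i$ is constant. The positive curvature that must exist is concentrated exactly at the antipodal corner you mention: there the one-sided derivatives of $f(d(\varphi_i,\theta))$ in $\varphi_i$ jump from negative to positive (a convex corner), so the distributional second derivative of $H_i$ picks up a strictly positive term $\frac{4\pi R_0^2}{\gamma^2}e^{-\pi^2R_0^2/\gamma^2}\rho(\varphi_i-\pi)$ whenever the antipode of $\varphi_i$ lies in $E_i$; no $\varepsilon$-excision or mollification of $\delta^2$ can remove it, since any smooth approximation must carry that derivative jump as a large positive second derivative. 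Consequently ``every critical point is a global maximizer'' cannot be deduced from global concavity. The correct repair is the one the paper makes implicitly: concavity is only needed, and only holds, on the region actually visited by the dynamics, namely $\varphi_i\in E_i$ with $E_i$ of arc length at most $L^k/2=\pi R_0$ (Lemma \ref{le:max_l}, together with Lemma \ref{lemma_agent_point_collide} keeping the agent inside its own cell), for then the antipode of $\varphi_i$ never lies in the interior of $E_i$, the integrand is smooth in $\varphi_i$ there (except harmlessly at $\theta=\varphi_i$), and your second-derivative bound gives concavity of $H_i$ on that interval, from which the maximality of the limit point follows as in the paper.
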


\begin{proof}
By taking the partial derivative of (\ref{H}) with respect to $\varphi_i$, we get
\begin{equation*}
\frac{{\partial H}}{{\partial {\varphi _i}}} = \int_{{E_i}} {\frac{{\partial f\left( {d({\varphi _i},\theta )} \right)}}{{\partial {\varphi _i}}}\rho \left( \theta  \right)d\theta }
\end{equation*}
Substituting $f(d)=e^{-{d^2}/{\gamma^2}}$ into the above formula yields
\[\frac{{\partial H}}{{\partial {\varphi _i}}} = \int_{{E_i^1}} {{e^{ - \frac{{{d^2}}}{{{\gamma ^2}}}}}\left( { - 2\frac{d}{{{\gamma ^2}}}} \right)\frac{{\partial d}}{{\partial {\varphi _i}}}\rho \left( \theta  \right)}+\int_{{E_i^2}} {{e^{ - \frac{{{d^2}}}{{{\gamma ^2}}}}}\left( { - 2\frac{d}{{{\gamma ^2}}}} \right)\frac{{\partial d}}{{\partial {\varphi _i}}}\rho \left( \theta  \right)} \]
The integral is segmented because the geodesic distance $d$ is not derivable when $\theta=\varphi_i$. And
${\frac{{\partial d}}{{\partial {\varphi _i}}}=R_o}$ as $\theta\in E_i^2$, ${\frac{{\partial d}}{{\partial {\varphi _i}}}=-R_o}$ as $\theta\in E_i^3$.

We take the partial derivative of the above formula with respect to $\varphi_i$ to get
\[\frac{{{\partial ^2}H}}{{\partial {\varphi _i}^2}} = \int_{{E_i^2}} {{e^{ - \frac{{{d^2}}}{{{\gamma ^2}}}}}\left( {4\frac{{{d^2}}}{{{\gamma ^4}}} - \frac{2}{{{\gamma ^2}}}} \right){{\left( {\frac{{\partial d}}{{\partial {\varphi _i}}}} \right)}^2}\rho \left( \theta  \right)d\theta } +\int_{{E_i^3}} {{e^{ - \frac{{{d^2}}}{{{\gamma ^2}}}}}\left( {4\frac{{{d^2}}}{{{\gamma ^4}}} - \frac{2}{{{\gamma ^2}}}} \right){{\left( {\frac{{\partial d}}{{\partial {\varphi _i}}}} \right)}^2}\rho \left( \theta  \right)d\theta }\]
From Lemma \ref{le:max_l}, we can get $d\le\pi R_0$. If $R_0\le\frac{\sqrt{2}\gamma}{2\pi}$, we have $\frac{{{\partial ^2}H}}{{\partial {\varphi _i}^2}}<0$.
Moreover, the Hessian matrix of the function of coverage quality (\ref{H}) satisfies
\begin{equation*}
\begin{split}
{\nabla ^2}H &= [\frac{\partial^2U}{\partial \varphi_i\partial \varphi_j}]=\text{diag}(\frac{{{\partial^2}H}}{{\partial {\varphi^2_1}}}, \frac{{{\partial^2}H}}{{\partial {\varphi^2_2}}}, ... ,\frac{{{\partial ^2}H}}{{\partial {\varphi^2_N}}})\in R^{N\times N}.
\end{split}
\end{equation*}
This means $H$ has a unique maximum. Combining with Theorem \ref{Theorem:H}, the dynamic system (\ref{XY}) will ensure the function
(\ref{H}) reaches the maximum value.
\end{proof}

\section{Case Studies}\label{sec:cas}
In this section, we will give some simulation and experiment results to verify our coverage algorithm. We implemented our algorithm on MATLAB 2022a. Now, we give the multi-agent barrier coverage algorithm in Table \ref{tab:multi_layer_coverage}.

\subsection{Numerical simulation}
We designed 3 layers of area. There are 50 agents needs to cover on these three layers to monitor the invasion of intruders. These three layers are designed as follows
\begin{equation}
   \left\{ \begin{aligned}
        R_1(\theta) &= 1 + 0.15\sin(4\theta), \\
        R_2(\theta) &= 2 + 0.15\sin(10\theta),\\
        R_3(\theta) &= 3 + 0.15\sin(40\theta).
   \end{aligned}
     \right.
    \label{eq:simulation_layers}
\end{equation}
The probabilistic model is given by $f(d({\varphi _i},\theta )) = \exp ( {- {d({\varphi _i},\theta )^2}})$, where the distance function $d$ is calculated as follows
\[d({\varphi _i},\theta ) = \left\{ {\begin{array}{*{20}{c}}
{\left| {\int_{{\varphi _i}}^\theta  {\sqrt {R{{\left( \theta  \right)}^2} + R'{{\left( \theta  \right)}^2}} d\theta } } \right|,}&{if\quad\left| {\int_{{\varphi _i}}^\theta  {\sqrt {R{{\left( \theta  \right)}^2} + R'{{\left( \theta  \right)}^2}} d\theta } } \right| \le \frac{L_{k_i}}{2}}\\
{L_{k_i}- \left| {\int_{{\varphi _i}}^\theta  {\sqrt {R{{\left( \theta  \right)}^2} + R'{{\left( \theta  \right)}^2}} d\theta } } \right|.}&{otherwise}
\end{array}} \right.\]
where $d$ is Lipschitz continuous.
The density function is $ \rho(\theta) = \frac{\theta}{2{\pi}^2} $.
We set the adjustable parameters as follows
\begin{equation*}
\left\{ {\begin{array}{*{20}{c}}
{{\kappa _r} = 0.1},\\
{{\kappa _\omega } = 0.01},\\
{{\kappa _s } = 0.05}.
\end{array}} \right.
\end{equation*}

\begin{figure}[t!]
\centering\includegraphics[width=15.5cm]{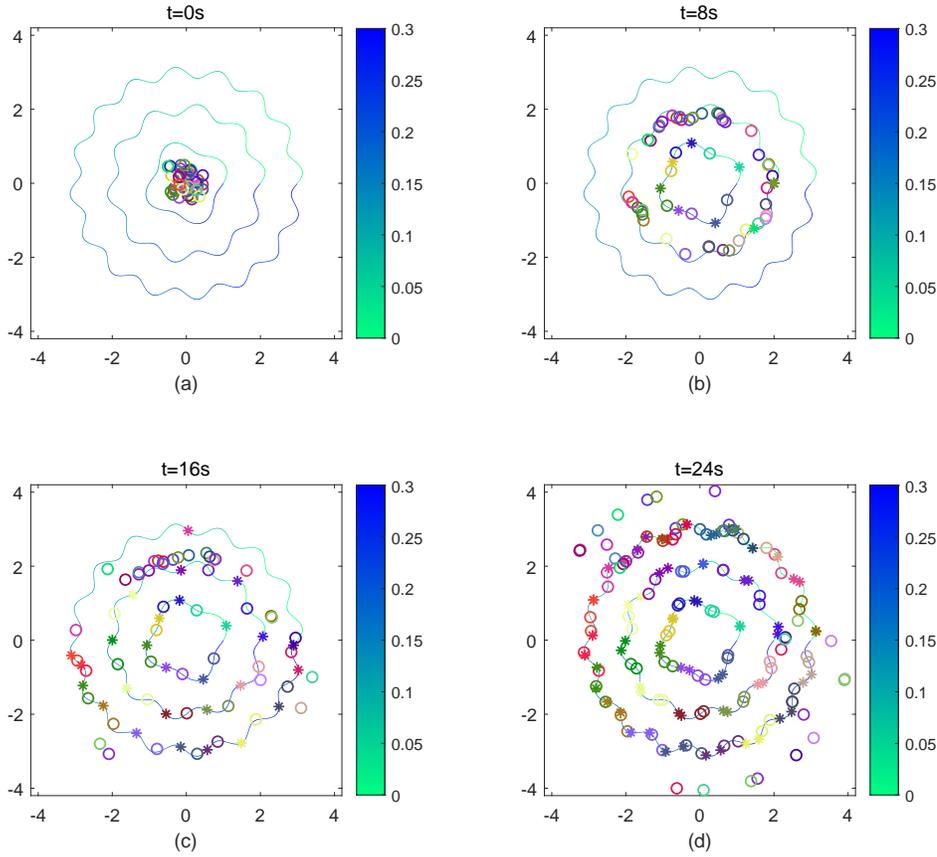}
\caption{Snapshots of simulation results. Circles denote the mobile agents, and the stars refer to the division points.}
\label{f:snapshot}
\end{figure}
As shown in Fig.\ref{f:snapshot}, we place the agent inside the innermost layer. All agents gradually expand outwards, and finally cover all three layers. And we intercept the position results of the algorithm at 4 time points, which are 0s, 8s, 16s and 24s respectively.

As shown in Fig.\ref{f:snapshot}, when the algorithm first starts running, all agents are in the innermost inner region. After the algorithm runs for 8 seconds, 6 agents have been covered on the first layer, and some agents have moved to the second layer.
Combined with Figure \ref{f:detect_probability}, after the algorithm runs for about 13 seconds, the detect probability of the third layer decreases.
When the algorithm runs to 16 seconds, we find that some agents are moving from the third layer to the second layer.
This is because the Algorithm \ref{tab:layer_change}, when the inner agent is not well qualified for its detection task, the outer agent will leave the outer layer and go to the inner layer to help the inner agent.
When the algorithm runs for 24 seconds, the multi-agent systems is basically stable, and most of the agents are already working on the layer. Around the circle with a radius of 4, some agents are patrolling, looking for any agents that need help, and when found, these patrolling agents will take action.
Finally, we give the results of the algorithm running to the last moment of the system in Figure \ref{f:final_shot}.
We can find that on each layer, the agents are denser where the invasion probability is high.
Moreover, there are still free agents patrolling the circle of radius 4.
\begin{figure}
\centering\includegraphics[width=10cm]{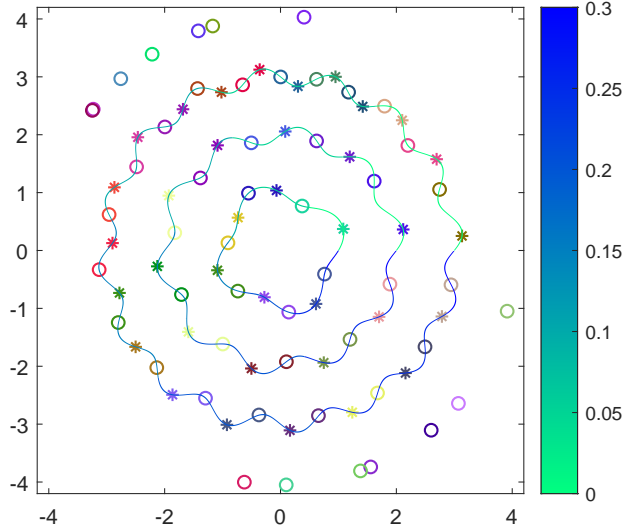}
\caption{The final result of the system state.}
\label{f:final_shot}
\end{figure}

In Fig. \ref{f:detect_probability}, we show how the detection probability of the system and each layer changes over time.
We can see that when the second and third layers have no agents, the total detection probability is the same as that of the first layer. When the second layer and the third layer have agents working one after another, the monitoring probability of the agents has a significant increase. Finally, it can be found that the detection probability of the multi-layer fence coverage algorithm exceeds 99.99$\%$.
\begin{figure}
\centering\includegraphics[width=15.5cm]{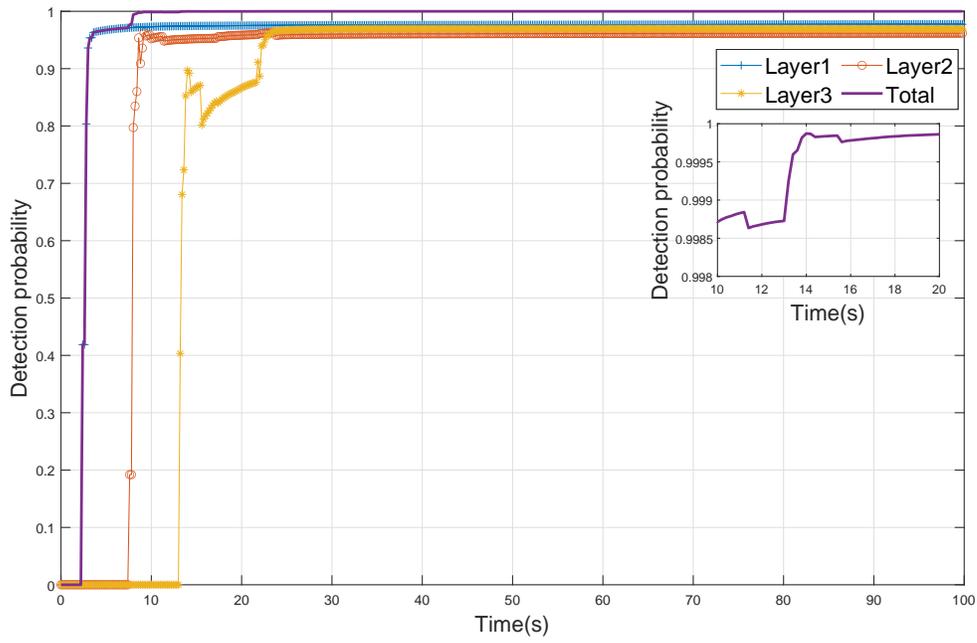}
\caption{Detection probability of each layer and total system.}
\label{f:detect_probability}
\end{figure}

We also did controlled experiments with multi-layer barrier coverage and single layer barrier coverage. As shown in Fig.\ref{f:single_and_multi}, the detection probability of the multi-layer barrier coverage was inferior to that of the single-layer fence cover for the initial period, but once agents moved to the second layer, the detection probability of the multi-layer barrier coverage reversed to that of the single-layer fence cover, and was higher than that of the single-layer for the rest of the time.
\begin{figure}
  \centering
  \includegraphics[width=15.5cm]{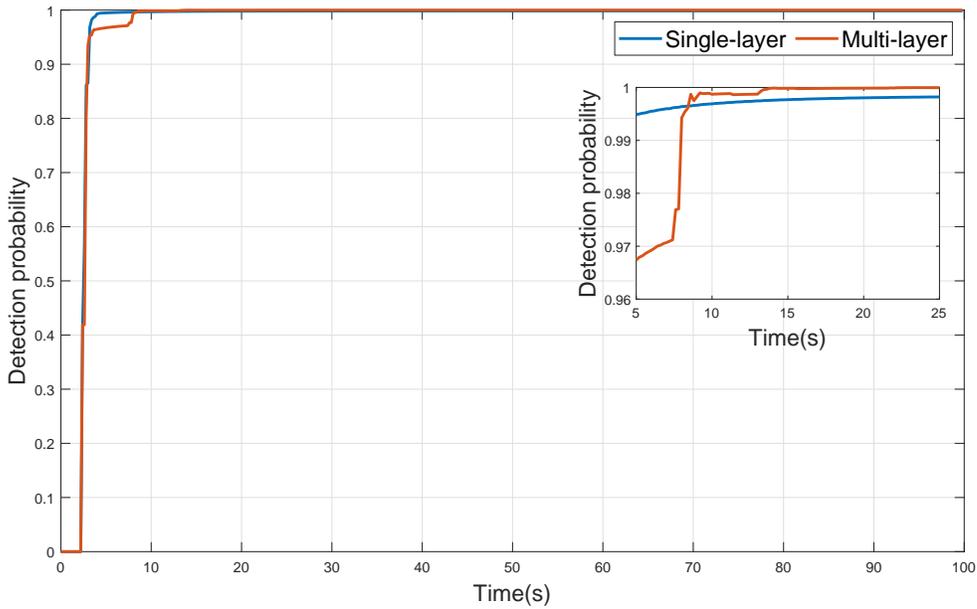}
  \caption{Difference between single layer barrier coverage and multi-layer barrier coverage for the same number of agents.}\label{f:single_and_multi}
\end{figure}

We counted the final detection probability of single-layer barrier coverage and multi-layer barrier coverage with different number of smart bodies, as shown in Fig.\ref{f:agent_number_dif}.
It can be found that there is no difference in the detect probability between single and multi-layer barrier coverage when the number of agents is small. However, the detection probability of the multi-layer barrier coverage is significantly higher than that of the single-layer barrier coverage when the number of agents gradually increases. When the number of smart bodies is large enough, the increase in the number of smart bodies is of little help to the single-layer barrier coverage.
When the number of agents is 50, the detection probability of single-layer barrier coverage reaches 99.8 percent, while the detection probability of multi-layer barrier coverage is very close to 100 percent.
\begin{figure}[t!]
\centering\includegraphics[width=15.5cm]{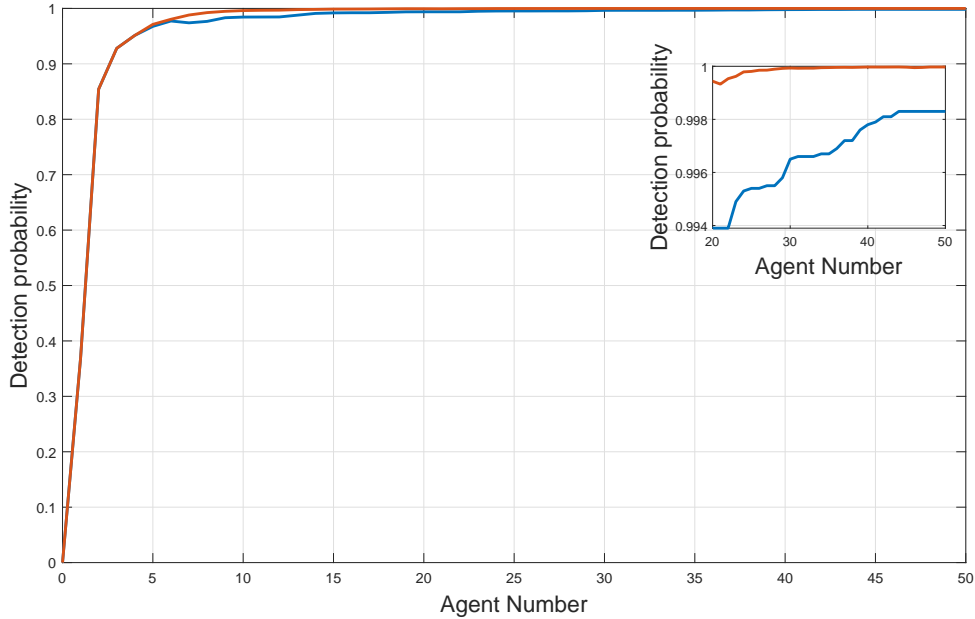}
\caption{Difference in detection probability between single and multi-layer barrier coverage for the same number of agents.}
\label{f:agent_number_dif}
\end{figure}

\section{Conclusions}\label{sec:con}

This paper presented a distributed multi-agent barrier coverage algorithm. First, a single-layer barrier coverage quality function was designed based on the probabilistic model of intrusion and a single-layer barrier coverage algorithm was designed based on the gradient method. Then a layer-to-layer adjustment mechanism was proposed based on the single-layer algorithm, which adjusts the number of agents on each layer so that the coverage quality of the whole system was improved. Then some theoretical analyses were given to theoretically verify the stability and effectiveness of the single-layer algorithm and the necessity of the multi-layer algorithm, and the theoretical results were given in some special cases. Finally, the effectiveness of our algorithm was verified by simulation and the practicality of the algorithm was verified by experiment.

\section{Appendix}

\section*{Acknowledgment}
The Project was supported by the Fundamental Research Funds for the Central Universities, China University of Geosciences~(Wuhan).


\begin{thebibliography}{99}
\bibitem{VT1995}Vicsek, Tamás, et al. "Novel type of phase transition in a system of self-driven particles." Physical review letters 75.6 (1995): 1226.

\bibitem{ro2020}Wilson S, Glotfelter P, Wang L, et al. The robotarium: Globally impactful opportunities, challenges, and lessons learned in remote-access, distributed control of multirobot systems[J]. IEEE Control Systems Magazine, 2020, 40(1): 26-44.

\bibitem{CJ2004}	Cortes J, Martinez S, Karatas T, et al. Coverage control for mobile sensing networks[J]. IEEE Transactions on robotics and Automation, 2004, 20(2): 243-255.

\bibitem{THM2013}   	Thanou M, Stergiopoulos Y, Tzes A. Distributed coverage using geodesic metric for non-convex environments[C]. 2013 IEEE international conference on robotics and automation. IEEE, 2013: 933-938.

\bibitem{zhaim2021} Zhai C, Zhang H T, Xiao G. Cooperative Coverage Control of Multi-Agent Systems and its Applications[M]. Springer, 2021.

\bibitem{SM2019}    Santos M, Mayya S, Notomista G, et al. Decentralized minimum-energy coverage control for time-varying density functions[C]. 2019 International Symposium on Multi-Robot and Multi-Agent Systems (MRS). IEEE, 2019: 155-161.

\bibitem{BA2020}	Benevento A, Santos M, Notarstefano G, et al. Multi-robot coordination for estimation and coverage of unknown spatial fields[C]. 2020 IEEE International Conference on Robotics and Automation (ICRA). IEEE, 2020: 7740-7746.

\bibitem{zhai13}	Zhai C, Hong Y. Decentralized sweep coverage algorithm for multi-agent systems with workload uncertainties[J]. Automatica, 2013, 49(7): 2154-2159.

\bibitem{IS2016}	Ivić S, Crnković B, Mezić I. Ergodicity-based cooperative multiagent area coverage via a potential field[J]. IEEE transactions on cybernetics, 2016, 47(8): 1983-1993.

\bibitem{Zheng2022}Zheng Y, Zhai C. Distributed Coverage Control of Multi-Agent Systems in Uncertain Environments using Heat Transfer Equations[J]. arXiv preprint arXiv:2204.09289, 2022.


\bibitem{ks2005}	Kumar S, Lai T H, Arora A. Barrier coverage with wireless sensors[C]//Proceedings of the 11th annual international conference on Mobile computing and networking. 2005: 284-298.

\bibitem{liu2008}    Liu B, Dousse O, Wang J, et al. Strong barrier coverage of wireless sensor networks[C]//Proceedings of the 9th ACM international symposium on Mobile ad hoc networking and computing. 2008: 411-420.

 \bibitem{wang2013}   Wang Z, Liao J, Cao Q, et al. Achieving k-barrier coverage in hybrid directional sensor networks[J]. IEEE Transactions on Mobile Computing, 2013, 13(7): 1443-1455.

\bibitem{chen2007}    Chen A, Kumar S, Lai T H. Designing localized algorithms for barrier coverage[C]//Proceedings of the 13th annual ACM international conference on Mobile computing and networking. 2007: 63-74.
\bibitem{cheng2009}	Cheng T M, Savkin A V. A distributed self-deployment algorithm for the coverage of mobile wireless sensor networks[J]. IEEE Communications Letters, 2009, 13(11): 877-879.
 \bibitem{banD2010}   Ban D, Jiang J, Yang W, et al. Strong k-barrier coverage with mobile sensors[C]//Proceedings of the 6th International Wireless Communications and Mobile Computing Conference. 2010: 68-72.

 \bibitem{BB2009}    Bhattacharya B, Burmester M, Hu Y, et al. Optimal movement of mobile sensors for barrier coverage of a planar region[J]. Theoretical Computer Science, 2009, 410(52): 5515-5528.

       \bibitem{song2018}	Song C, Fan Y. Coverage control for mobile sensor networks with limited communication ranges on a circle[J]. Automatica, 2018, 92: 155-161.

\bibitem{zhai16} C. Zhai, F. He, Y. Hong, L. Wang and Y. Yao, Coverage-based interception algorithm of multiple interceptors against the target involving decoys. {\sl AIAA Journal of Guidance, Control, and Dynamics}, pp.1-7, 2016.





\end{thebibliography}
\end{document}